\newenvironment{bsm}
{\left[\begin{smallmatrix}}
{\end{smallmatrix}\right]}
\numberwithin{equation}{subsection}
\newtheorem{thm}{Theorem}[subsection]
\newtheorem{prop}[thm]{Proposition}
\theoremstyle{remark}
\newtheorem{rmk}[thm]{Remark}
\newcommand{\bA}{\mathbb{A}}
\newcommand{\bC}{\mathbb{C}}
\newcommand{\bF}{\mathbb{F}}
\newcommand{\bI}{\mathbb{I}}
\newcommand{\bQ}{\mathbb{Q}}
\newcommand{\bR}{\mathbb{R}}
\newcommand{\bU}{\mathbb{U}}
\newcommand{\bZ}{\mathbb{Z}}
\newcommand{\cF}{\mathcal{F}}
\newcommand{\cI}{\mathcal{I}}
\newcommand{\cK}{\mathcal{K}}
\newcommand{\cO}{\mathcal{O}}
\newcommand{\cP}{\mathcal{P}}
\newcommand{\fm}{\mathfrak{m}}
\newcommand{\fp}{\mathfrak{p}}
\newcommand{\fC}{\mathfrak{C}}
\newcommand{\sB}{\mathscr{B}}
\newcommand{\sC}{\mathscr{C}}
\newcommand{\sG}{\mathscr{G}}
\newcommand{\sT}{\mathscr{T}}
\DeclareMathAlphabet{\mathpzc}{OT1}{pzc}{m}{it}
\newcommand{\pzC}{\mathpzc{C}}
\newcommand{\pzM}{\mathpzc{M}}
\newcommand{\pzV}{\mathpzc{V}}
\DeclareMathOperator{\GL}{GL}
\DeclareMathOperator{\U}{U}
\DeclareMathOperator{\OO}{O}
\DeclareMathOperator{\SL}{SL}
\DeclareMathOperator{\Sp}{Sp}
\DeclareMathOperator{\Mp}{Mp}
\newcommand{\diag}{\mathrm{diag}}
\newcommand{\Meas}{\mathpzc{M}eas}
\newcommand{\ord}{\mathrm{ord}}
\newcommand{\Tr}{\mathrm{Tr}}
\newcommand{\triv}{\mathrm{triv}}
\newcommand{\vol}{\mathrm{vol}}
\DeclareMathOperator{\Her}{Her}
\DeclareMathOperator{\Hom}{Hom}
\newcommand{\qexp}{q\text{-}\mathrm{exp}}
\newcommand{\lhra}{\ensuremath{\lhook\joinrel\longrightarrow}}
\newcommand{\lra}{\longrightarrow}
\newcommand{\ra}{\rightarrow}
\newcommand{\hra}{\hookrightarrow}
\newcommand{\ol}{\overline}
\newcommand{\llb}{\llbracket}
\newcommand{\rrb}{\rrbracket}
\newcommand{\bid}{\mathbf{1}}
\newcommand*\bdot{\mathpalette\bdot@{.5}}
\newcommand*\bdot@[2]{\mathbin{\vcenter{\hbox{\scalebox{#2}{$\,\,\m@th#1\bullet\,\,$}}}}}
\def\l@section{\@tocline{1}{0pt}{1pc}{}{}}
\def\l@subsection{\@tocline{2}{0pt}{1pc}{4.6em}{}}
\def\l@subsubsection{\@tocline{3}{0pt}{1pc}{7.6em}{}}
\renewcommand{\tocsection}[3]{%
  \indentlabel{\@ifnotempty{#2}{\makebox[2.3em][l]{%
    \ignorespaces#1 #2.\hfill}}}#3}
\renewcommand{\tocsubsection}[3]{%
  \indentlabel{\@ifnotempty{#2}{\hspace*{2.3em}\makebox[2.3em][l]{%
    \ignorespaces#1 #2.\hfill}}}#3}
\renewcommand{\tocsubsubsection}[3]{%
  \indentlabel{\@ifnotempty{#2}{\hspace*{4.6em}\makebox[3em][l]{%
    \ignorespaces#1 #2.\hfill}}}#3}
\renewcommand*\env@matrix[1][\arraystretch]{%
  \edef\arraystretch{#1}%
  \hskip -\arraycolsep
  \let\@ifnextchar\new@ifnextchar
  \array{*\c@MaxMatrixCols c}}
\newtheoremstyle{named}%
    {}{}{\itshape}{}{\bfseries}{.}{.5em}{\thmnote{#3}}
\theoremstyle{named}
\newcommand{\uchi}{\protect\underline{\chi}}
\newcommand{\supp}{\mathrm{supp}}
\newcommand{\ttt}{{\tt t}}
\newcommand{\bvarphi}{\bm{\varphi}}
\newcommand{\Pord}{P\text{-ord}}
\title{Test Schwartz functions at $p$ for theta lifts of Hida families}
\author{Zheng Liu}
\address{Z. L.:University of California, Santa Barbara, CA, United States}
\email{\href{mailto:zliu@math.ucsb.edu}{zliu@math.ucsb.edu}}
\date{}
\begin{document}

\maketitle

\begin{abstract}
We construct Hida families of theta lifts from definite orthogonal and unitary groups. A major ingredient of the construction is the choice of test Schwartz functions at places dividing $p$. We select a special type of Schwartz functions endowed with the equivariance property for the action of $\bU_p$-operators.
\end{abstract}

\tableofcontents 
\numberwithin{equation}{subsection}

\section{Introduction}

Hida families are families of ordinary automorphic forms parameterized by finite covers of the spectrums of Iwasawa algebras. Since being introduced by Hida in the 1980's \cite{HidaFamily}, they have found broad applications in studying arithmetic properties of automorphic forms and Galois representations. When implementing Hida families in those  applications, one aspect that is often essential is to construct certain explicit Hida families. For $\GL_2$, examples of explicit Hida families include Eisenstein families and CM families. They are used in the proofs of  Iwasawa Main Conjectures for totally real fields \cite{WiMC} and CM fields \cite{HiTi93,HiTi94}. They are also a crucial ingredient in the construction of various $p$-adic $L$-functions, including  Rankin--Selberg $p$-adic $L$-functions \cite{Hi88,HiFRankin} and anti-cyclotomic $p$-adic $L$-functions for modular forms and their Rankin--Selberg products. Explicit Eisenstein families on groups of higher ranks are constructed and utilized in \cite{SU,HsiehMC,WanU31,CLW,WanL,EHLS,SLF,LRtz,KEF}. 
In this paper, we turn to explicit CM families, {\it i.e.} families of theta lifts, on groups of higher ranks. 

Let $p$ be an odd integer. Like  Eisenstein families, one subtle layer of the construction of $p$-adic families of theta lifts  is the selection of test sections, in particular at places dividing $p$. Our first result is about a special type of test Schwartz functions at $p$ for theta lifts between symplectic and split orthogonal groups and between quasi-split unitary groups. We show that they have nice equivariant properties with respect to $\bU_p$-operators.

Let $F_\fp$ be a $p$-adic field and $K_\fp=F_\fp$ or $F_\fp\oplus F_\fp$ or a quadratic field extension of $F_\fp$. Let $\cO_\fp$ be the valuation ring of $K_\fp$. (When $K_\fp=F_\fp\oplus F_p$, $\cO_\fp$ is the direct sum of valuation rings of the two copies of $F_\fp$.) Fix positive integers $n\geq m$ and denote by $G_\fp,H_\fp$ the symplectic group of degree $2n$  and the split orthogonal group of degree $2m$ over $K_\fp=F_\fp$ or the quasi-split unitary groups of degree $2n$ and degree $2m$ with respect to the $K_\fp/F_\fp$, $[K_\fp:F_\fp]=2$. More precisely, 
\begin{align*}
  G_\fp&=\left\{g\in\GL_{2n}(K_\fp):\ltrans{\bar{g}} \begin{bmatrix}\phantom{0}&\bid_n\\ -\bid_n&\end{bmatrix} g=\begin{bmatrix}\phantom{0}&\bid_n\\ -\bid_n\end{bmatrix}\right\},\\
  H_\fp&=\left\{g\in\GL_{2m}(K_\fp):\ltrans{\bar{g}} \begin{bmatrix}\phantom{0}&\bid_m\\ \bid_m&\end{bmatrix} g= \begin{bmatrix}\phantom{0}&\bid_m\\ \bid_m\end{bmatrix}\right\}.
\end{align*}
$G_\fp\times H_\fp$ acts on the space of Schwartz functions on $M_{2m,n}(K_\fp)$ via Weil representation. For 
\begin{align}
   \ttt&= \diag(a_1,\dots,a_n,\bar{a}^{-1}_1,\dots,\bar{a}^{-1}_n)\in G_\fp, &&a_ja^{-1}_{j+1}\in \cO_\fp, \,1\leq j\leq n-1, \,a_n\bar{a}_n\in \cO_\fp,\\
   \label{eq:T+H} \ttt'&= \diag(a_1,\dots,a_m,\bar{a}^{-1}_1,\dots,\bar{a}^{-1}_m)\in H_\fp, &&a_ja^{-1}_{j+1}\in \cO_\fp, \,1\leq j\leq m-1, \,a_m\bar{a}_m\in \cO_\fp,
\end{align} we define the $\bU_\fp$-operators
\begin{align}
  \label{eq:UGpt} U^{G_\fp}_{\fp,\ttt}&=\int_{U_{G_\fp,\cO_\fp}}  (\text{action by $u\ttt$}) \, du 
   = \vol\left(\ttt U_{G_\fp,\cO_\fp}\ttt^{-1} \right)\hspace{-2em} \sum_{u\in U_{G_\fp,\cO_\fp}/\ttt U_{G_\fp,\cO_\fp}\ttt^{-1}} \text{action by $u\ttt$},\\
    \label{eq:UHpt} U^{H_\fp}_{\fp,\ttt'}&=\int_{U_{H_\fp,\cO_\fp}}  (\text{action by $u\ttt'$}) \, du 
   = \vol\left(\ttt' U_{H_\fp,\cO_\fp}\ttt^{\prime-1} \right)\hspace{-2em} \sum_{u\in U_{H_\fp,\cO_\fp}/\ttt' U_{H_\fp,\cO_\fp}\ttt^{\prime-1}} \text{action by $u\ttt$},
\end{align}
where $U_{G_\fp,\cO_\fp}$ (resp. $U_{H_\fp,\cO_\fp}$) denotes the $\cO_\fp$ points of the unipotent subgroup of the standard Borel subgroup of $G_\fp$ (resp. $H_\fp$) (see \S\ref{sec:UpSch-notation} for the precise definition).

Given an $m$-tuple of characters $\uchi_\fp=(\chi_{\fp,1},\dots,\chi_{\fp,m})$ of $\cO^\times_\fp$,  writing $X\in M_{2m,n}(K_\fp)$ in blocks as $X=\begin{blockarray}{ccc}m&n-m\\ \begin{block}{[cc]c}X_1&X_2&m\\X_3&X_4&m \\ \end{block}\end{blockarray}$, we define the Schwartz function $\phi_{\uchi_\fp}$ on $M_{2m,n}(F_\fp)$  as
\begin{equation}\label{eq:Schwchi}
\begin{aligned}
   \phi_{\uchi_\fp}\left(X\right)
   =&\, \prod_{j=1}^m
   \chi_{\fp,j}\left(D_j(X_1) \right)\cdot \mathds{1}_{B^-_m(\cO_{\fp})B_m(\cO_{\fp})}(X_1)
   \cdot \mathds{1}_{M_{m,n-m}(\cO_{\fp})}(X_2)
   \cdot \mathds{1}_{M_{m,n}(\cO_{\fp})}(X_3,X_4).
\end{aligned},
\end{equation}
with $D_j(X_1)$ the determinant of the upper-left $j\times j$ block of $X_1$ and $B_m(\cO_\fp)$ (resp. $B^-_m(\cO_\fp)$) the subgroup of upper triangular (resp. lower triangular) matrices in $\GL_m(\cO_\fp)$.

\begin{thm}\label{thm:Up-equiv}
The Schwartz function $\phi_{\uchi_\fp}$ defined in \eqref{eq:Schwchi} satisfies the following equivariance property for the $U_p$-operators defined in \eqref{eq:UGpt}\eqref{eq:UHpt}: For $\ttt= \diag(a_1,\dots,a_m,\bar{a}^{-1}_1,\dots,\bar{a}^{-1}_m)\in H_\fp$ satisfying the conditions in \eqref{eq:T+H} and $\breve{\ttt}=\diag(a_1,\dots,a_m,1,\dots,1,\bar{a}^{-1}_1,\dots,\bar{a}^{-1}_m1,\dots,1)\in G_\fp$,
\begin{align*}
   &\vol\left(\ttt U_{H_\fp,\cO_\fp}\ttt^{-1}\right)^{-1}\cdot U^{H_\fp}_{\fp,\ttt}\phi_{\uchi_\fp} 
   =  |\det A(\breve{\ttt})|^{m}_\fp\,\vol\left(A(\breve{\ttt}) U_{\GL_n,\cO_\fp}A(\breve{\ttt})^{-1}\right)^{-1}\cdot U^{G_\fp}_{\fp,\breve{\ttt}}\phi_{\uchi_\fp}.
\end{align*}
Here $A(\breve{\ttt})=\diag(a_1,\dots,a_m,1,\dots,1)\in\GL_n(K_\fp)$, $U_{\GL_n,\cO_\fp}$ is the subgroup of unipotent upper triangular matrices in $\GL_n(\cO_\fp)$, and we use Haar measures on $U_{H_\fp,\cO_\fp},U_{G_\fp,\cO_\fp},U_{\GL_n,\cO_\fp}$ with total volumes equal to $1$. 
\end{thm}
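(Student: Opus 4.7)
The plan is to compute both sides directly by unfolding the $\bU_\fp$-operators as integrals of Weil-representation actions, decomposing the unipotent subgroups into their Levi and Siegel pieces, and using the piecewise support of $\phi_{\uchi_\fp}$ to match the two expressions term by term.

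First I would parametrize $U_{H_\fp}(\cO_\fp) = N_{H_\fp}(\cO_\fp) \cdot U_{L_H}(\cO_\fp)$ and $U_{G_\fp}(\cO_\fp) = N_{G_\fp}(\cO_\fp) \cdot U_{L_G}(\cO_\fp)$, where $N_{H_\fp}, N_{G_\fp}$ are the Siegel unipotent radicals (skew-Hermitian / Hermitian upper-triangular blocks) and $U_{L_H} \cong U_{\GL_m}(\cO_\fp)$, $U_{L_G} \cong U_{\GL_n}(\cO_\fp)$ are the Levi unipotent parts. I would then write out the Weil-representation action of each factor: $(\omega(h)\phi)(X) = \phi(h^{-1}X)$ for $h \in H_\fp$ (left multiplication on the $2m$ rows of $X$); the Siegel Levi of $G_\fp$ acts by right multiplication on the $n$ columns of $X$ with a prefactor involving $|\det A|_\fp^{\pm m}$; and the Siegel unipotent of $G_\fp$ multiplies $\phi(X)$ by $\psi_\fp(\mathrm{tr}(T \cdot \bar{X}^t J_H X))$ for a fixed additive character $\psi_\fp$, where $J_H$ denotes the Gram matrix defining $H_\fp$.

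The critical structural observation is that $\omega(\ttt)$ (left-scaling the rows of $X$) and $\omega(\breve{\ttt})$ (right-scaling the first $m$ columns of $X$, up to the $|\det A(\breve{\ttt})|_\fp^m$ factor) induce the identical transformation on the principal minors of $X_1$: $D_j(a^{-1}X_1) = D_j(X_1 a^{-1}) = (a_1\cdots a_j)^{-1}D_j(X_1)$. Consequently the $\chi_{\fp,j}$-values in $\phi_{\uchi_\fp}$ transform identically on both sides. Next I would evaluate the Siegel integrations: on the $G_\fp$-side, integrating over Hermitian $T$ with entries in $\cO_\fp$ produces an indicator of $\bar{X}^t J_H X \in \Her_n(\cO_\fp)$, which is automatic on the relevant support; on the $H_\fp$-side, integrating over the skew-Hermitian $S$ shifts $X_1 \mapsto X_1 - SX_3$ by elements of $M_{m,m}(\cO_\fp)$ since $X_3 \in M_{m,m}(\cO_\fp)$ there, so the integral collapses to a volume factor. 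The Levi integrations act on $X_1$ by right- and left-multiplication respectively, and I would compare them using the LU decomposition encoded in $X_1 \in B^-_m(\cO_\fp)B_m(\cO_\fp)$ to show they contribute the same net factor once support indicators are enforced.

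The hard part will be matching the explicit volume factors: the LHS normalization $\vol(\ttt U_{H_\fp}\ttt^{-1})^{-1}$ and the RHS normalization $|\det A(\breve{\ttt})|_\fp^m\vol(A(\breve{\ttt})U_{\GL_n}A(\breve{\ttt})^{-1})^{-1}$ combine contributions from conjugating the Siegel and Levi unipotents by $\ttt$ and $\breve{\ttt}$ respectively, with the prefactor $|\det A(\breve{\ttt})|_\fp^m$ arising from the Weil-representation normalization of the Siegel Levi action on $G_\fp$. Verifying the equality requires careful bookkeeping in terms of the valuations of $a_1,\ldots,a_m$ and the dimensions of the various unipotent pieces on both sides; the structural reason this works out is precisely that the cocharacter $A(\breve{\ttt}) = \mathrm{diag}(a_1,\ldots,a_m,1,\ldots,1)$ is the natural extension of the $H_\fp$-cocharacter $\mathrm{diag}(a_1,\ldots,a_m)$ to $\GL_n$ via the embedding pertinent to the dual pair.
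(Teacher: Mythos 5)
Your overall setup is the same as the paper's: unfold both $\bU_\fp$-operators via the Schr\"odinger-model formulas, split $U_{G_\fp,\cO_\fp}$ into its Siegel-unipotent part (whose integral produces the indicator $\mathds{1}_{\Her_n(\cO_\fp)}(\ltrans{\bar{X}}J X)$) and the Levi part $U_{\GL_n,\cO_\fp}$ acting on columns, and observe that the character factor transforms identically on both sides because $D_j(aX_1)$ and $D_j(X_1a)$ scale by the same product $a_1\cdots a_j$ for diagonal $a$; the prefactor $|\det A(\breve{\ttt})|^m_\fp$ is indeed the Weil-representation normalization of the Siegel Levi. Up to that point the proposal matches the paper.

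However, there is a genuine gap at the heart of the argument: after stripping off the character factor, what remains to be proved is that two \emph{averaged indicator functions} coincide, namely the average of $\mathds{1}_{V_{\supp}}(\ttt\ltrans{u}X)$ over $u\in U_{H_\fp,\cO_\fp}$ and the average of $\mathds{1}_{\Her_n(\cO_\fp)}(\ltrans{\bar{X}}JX)\,\mathds{1}_{V_{\supp}}(XuA(\breve{\ttt}))$ over $u\in U_{\GL_n,\cO_\fp}$, and your proposal never actually does this. Your claim that the $H_\fp$-side Siegel integral ``collapses to a volume factor'' is only valid for $X$ already lying in the basic support set $V_{\supp}$; the whole point is that averaging over $U_{H_\fp,\cO_\fp}$ and translating by $\ttt$ strictly enlarges the support (to $U^-_{H_\fp,\cO_\fp}\,\ttt^{-1}V_{\supp}$, roughly), while the $G_\fp$-side operator enlarges the support by \emph{right} column translates intersected with the quadratic integrality condition, and one must prove these two a priori very different sets agree. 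This requires two steps you omit: (i) showing each averaged function is $\{0,1\}$-valued (so that it equals the characteristic function of its support and one may compare supports rather than values — this is exactly what the normalizations $\vol(\ttt U_{H_\fp,\cO_\fp}\ttt^{-1})^{-1}$ and $\vol(A(\breve{\ttt})U_{\GL_n,\cO_\fp}A(\breve{\ttt})^{-1})^{-1}$ are for, so the ``volume bookkeeping'' you flag as the hard part is essentially automatic); and (ii) the explicit matrix decomposition identifying the two supports, where the integrality of $\ltrans{\bar{X}}JX$ supplied by the $G_\fp$-side Siegel integral is used essentially to show that the symmetrized part $\tfrac{1}{2}(Y_3+\ltrans{\bar{Y}}_3)$ of the lower-left block conjugates integrally by $\ttt$, while the antisymmetric part $\tfrac{1}{2}(Y_3-\ltrans{\bar{Y}}_3)$ is absorbed into $U^-_{H_\fp,\cO_\fp}$. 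Without (ii), the two sides are not matched; your appeal to the LU decomposition of $X_1$ addresses only the Levi direction, not this crucial interaction between the $H_\fp$-unipotent on rows and the Hermitian condition on columns. (Minor: with the paper's conventions the $H_\fp$-Siegel unipotent shifts $X_3,X_4$ by multiples of $X_1,X_2$, not $X_1$ by a multiple of $X_3$.)
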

This is proved in Theorem~\ref{thm:local}, where more cases with $H_\fp$ of odd degree and $\breve{\ttt}$ of a slightly more general shape are covered. \\

The $\bU_p$-equivariance property of the Schwartz function $\phi_{\uchi_\fp}$ makes it convenient for constructing theta lifts of Hida families. We consider such a construction in a simplest case for theta lifts from a compact orthogonal group of even degree to a symplectic group or from a compact unitary group to a quasi-split unitary group. Let $F$ be a totally real field and $K$ be $F$ or a quadratic CM extension of $F$ in which $p$ is unramified.

Let $G'$ be the algebraic group over $\bQ$ such that for a $\bQ$-algebra $R$, 
\[
    G'(R)=\left\{g\in \GL_{2n}(R\otimes_\bQ K):\ltrans{\bar{g}}\begin{bmatrix}&\bid_n\\-\bid_n\end{bmatrix}=\nu(g)\begin{bmatrix}&\bid_n\\-\bid_n\end{bmatrix},\,\nu(g)\in R^\times\right\}
\]
Let $W$ be a symmetric space over $K$ when $K=F$ (resp. Hermitian space over $K$ when $K$ is a CM extension of $F$) of dimension $2m$, positive definite at all archimedean places and split at all places above $p$. Denote by $H$ the subgroup of ${\rm Res}_{K/\bQ}\GL_{n,K}$  preserving the symmetric (resp. Hermtian) form on $W$. We fix an isomorphism between $W\otimes_{\bQ}\bQ_p$ and $(K\otimes_\bQ\bQ_p)^{\oplus 2m}$ equipped with the symmetric (resp. Hermtian) form $\begin{bsm}&\bid_m\\ \bid_m\end{bsm}$. Denote by $P\subset\GL_n$ the parabolic subgroup consisting of elements with $0$ lower-left $(n-m)\times m$ blocks. (For this parabolic subgroup $P$, we can define $P$-ordinary projector for $p$-adic automorphic forms on $G'$ and have Hida theory established for them. See \S\S\ref{sec:Pord},\ref{sec:HidaTheory}.)

\begin{thm} 
Suppose that we are in one of the cases listed at the end of  \S\ref{sec:HidaTheory}. Given an $\bI$-adic Hida family $\bvarphi$ on $H$ of tame level $K^{H,p}_f$ (as in \S\ref{sec:globalsetting}), fixing a Schwartz function $\phi^p_f$ on $W^n\otimes_F\bA^p_{F,f}$ taking values inside $\cO_L\cap \ol{\bQ}$, invariant under an open compact subgroup $K^{G',p}_f$ of $\prod_{v\neq p \text{ finite place of $\bQ$}}G'(\bZ_v)$, there exists an $\bI$-adic $P$-ordinary family
\[
    {\bm \theta}(\bvarphi)\in  \pzM^{G',n-m}_{\Pord}(K^{G',p}_f,\cO_L)\otimes_{\Lambda_P} \bI_{\bvarphi},
\]
(see \S\ref{sec:HidaTheory} for the exact definition of the space on the right hand side), interpolating the theta lifts of specializations of $\bvarphi$ at finite-order points. More precisely, for all points $\lambda:\bI_{\bvarphi}\ra\ol{\bQ}_p$ such that the composition $\prod_{\fp\mid p}(\cO^\times_\fp)^m\cong T_{H,p}(\bZ_p)\ra \cO_L\llb T_{H,p}(\bZ_p)\rrb^\times\ra \bI^\times_{\bvarphi}\stackrel{\lambda}{\ra} \bar{\bQ}^\times_p$ at each $\fp$ is an $m$-tuple of finite-order characters $\uchi_\fp$,
\begin{equation}\label{eq:theta-dagger}
    \lambda({\bm \theta}(\bvarphi))
    =\vol(K^{H,p}_f)^{-1}\cdot\imath_{G,G'}\left(\theta^\dagger\left(\phi^p_f\otimes \prod\nolimits_{\fp\mid p}\phi_{\uchi_\fp}\otimes\prod\nolimits_{\sigma:F\ra\bR} \phi^\sG_\sigma,\lambda(\bvarphi)\right) \right),
\end{equation}
where $\phi_{\uchi_\fp}$ is the Schwartz function defined in \eqref{eq:Schwchi}, $\phi^\sG_\sigma$ is the Gaussian function defined in \eqref{eq:Gaussian},  $\theta^\dagger$ denotes the theta lift from $H$ to $G=\{g\in G:\nu(g)=1\}$ with a modification at $p$ defined in \eqref{eq:thetadagger}, and $\imath_{G,G'}$ is the extension by zero for the embedding \eqref{eq:imathembd}.
\end{thm}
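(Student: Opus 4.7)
The plan is to construct $\bm\theta(\bvarphi)$ by $p$-adic interpolation of the classical theta lifts at the Zariski-dense set of finite-order arithmetic points of $\bI_{\bvarphi}$, and then to invoke the Hida control theorem recalled in \S\ref{sec:HidaTheory} to promote this interpolation to an element of $\pzM^{G',n-m}_{\Pord}(K^{G',p}_f,\cO_L)\otimes_{\Lambda_P}\bI_{\bvarphi}$.

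First, for each finite-order point $\lambda$ with associated $m$-tuples of characters $\uchi_\fp$ for $\fp\mid p$, the right-hand side of \eqref{eq:theta-dagger} is a well-defined classical automorphic form on $G'$: the Gaussian $\phi^\sG_\sigma$ at archimedean places fixes holomorphy and the $\infty$-type; $\phi^p_f$ controls the tame level; and $\phi_{\uchi_\fp}$ records the $\lambda$-dependence at primes above $p$. I then apply Theorem~\ref{thm:Up-equiv} at each $\fp\mid p$ to convert the $\bU_\fp$-action on the $H$-argument of the theta kernel into the matching $\bU_\fp$-action on the $G$-argument, up to the explicit normalization $|\det A(\breve\ttt)|^m_\fp$ and the volume ratio. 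Since $\lambda(\bvarphi)$ is $P$-ordinary and the paired operator on $G$ is (after extension by $\imath_{G,G'}$) the one used to define $P$-ordinarity on $G'$, this shows that $\lambda(\bm\theta(\bvarphi))$ lies in the $P$-ordinary subspace with $\bU_\fp$-eigensystem determined by that of $\lambda(\bvarphi)$.

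Second, to glue these classical forms into an $\bI$-adic family I need integrality and $p$-adic congruences as $\lambda$ varies. The decisive observation is that the support condition $X_1\in B^-_m(\cO_\fp)B_m(\cO_\fp)$ in \eqref{eq:Schwchi} forces each principal minor $D_j(X_1)$ to lie in $\cO^\times_\fp$, so on the support of $\phi_{\uchi_\fp}$ the function factorizes as $\prod_j\chi_{\fp,j}(D_j(X_1))$ times a $\uchi_\fp$-independent characteristic function. Hence $\phi_{\uchi_\fp}$ depends $p$-adically continuously on $\uchi_\fp$, uniformly in $X$. Combined with the integrality of $\phi^p_f$ and $\bvarphi$, and with the fact that the theta series evaluated at any point of $G'$ is a locally finite sum whose terms are controlled at infinity by the Gaussian and at $p$ by the compact support of $\phi_{\uchi_\fp}$, this yields congruences $\lambda(\bm\theta(\bvarphi))\equiv\lambda'(\bm\theta(\bvarphi))\pmod{p^N}$ whenever $\lambda\equiv\lambda'\pmod{p^N}$ on $T_{H,p}(\bZ_p)$.

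Finally, applying the vertical control/density theorem for the $P$-ordinary space on $G'$ established in \S\ref{sec:HidaTheory}, the compatible, integral, $P$-ordinary system $\{\lambda(\bm\theta(\bvarphi))\}_\lambda$ assembles uniquely into an element of $\pzM^{G',n-m}_{\Pord}(K^{G',p}_f,\cO_L)\otimes_{\Lambda_P}\bI_{\bvarphi}$ whose specializations satisfy \eqref{eq:theta-dagger}. The main obstacle is the bookkeeping that turns the purely local Theorem~\ref{thm:Up-equiv} into global $P$-ordinarity: one must track the normalization factors (the volume ratios, $|\det A(\breve\ttt)|^m_\fp$, the modification built into $\theta^\dagger$, and the extension by zero via $\imath_{G,G'}$) and match them precisely with the $\bU_p$-operator normalization built into the Hida-theoretic setup on $G'$, so that the $\bU_p$-eigenvalues of $\bm\theta(\bvarphi)$ are $p$-adic units and the $P$-ordinary projector may be applied without introducing denominators.
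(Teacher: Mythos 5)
Your overall route---specialize at finite-order points, use Theorem~\ref{thm:Up-equiv} to control the $\bU_p$-action, interpolate, and invoke Hida theory on $G'$---is the paper's route in outline, and your observation that the support condition $X_1\in B^-_m(\cO_\fp)B_m(\cO_\fp)$ forces $D_j(X_1)\in\cO_\fp^\times$ (so that $\uchi_\fp\mapsto\phi_{\uchi_\fp}$ is interpolated by a measure) is correct and is indeed the key local input. However, the two steps you treat as routine are exactly where the paper has to work, and as written both have gaps. For the gluing: you assert congruences $\lambda({\bm\theta}(\bvarphi))\equiv\lambda'({\bm\theta}(\bvarphi))$ from the continuity of $\phi_{\uchi_\fp}$ in $\uchi_\fp$, but the specialization $\theta^\dagger(\cdot,\lambda(\bvarphi))$ in \eqref{eq:thetadagger} is an integral over $[H]$ against a form of level $K^H_{p,1}(p^r)$ with $r$ growing with the conductor of $\uchi$, renormalized by $\alpha(\varphi,\ttt^\star_p)^{-s}$ and translated by $\vartheta_{H,p}(\ttt^\star_p)^s$. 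Making this uniform in $r$ is precisely the content of the $p$-adic Petersson pairing $\sB_{K^{H,p}_f}$ of \S\ref{sec:Pet}: the Fourier coefficients of the theta kernel are first packaged as measures $\mu_{g^p_f,\beta}\in\Meas\left(T_{H,p}(\bZ_p),V^H(K^{H,p}_f,\cO_L)\right)^\natural$ (Proposition~\ref{prop:mu-beta}), and the pairing with the ordinary $\bvarphi$ is shown to be independent of the choice of sufficiently positive $\ttt_p$ using the invertibility of $\tilde{U}^H_{p,\ttt^\star_p}$ on the ordinary part. Without constructing this pairing (or an equivalent), you have a collection of classical forms but not yet an element of an $\cO_L\llb T_{H,p}(\bZ_p)\rrb$-module.

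The more serious gap is the last step. The assertion that the compatible system ``assembles uniquely into an element of $\pzM^{G',n-m}_{\Pord}(K^{G',p}_f,\cO_L)\otimes_{\Lambda_P}\bI_{\bvarphi}$'' does not follow from the Hida theory recalled in \S\ref{sec:HidaTheory}: that theory provides the embedding \eqref{eq:Ptau-embd} of classical $P$-ordinary forms into specializations of the $\Lambda_P$-adic space, not the converse statement that a $\Lambda_P$-adic family of $q$-expansions with classical specializations at a dense set of points arises from a $\Lambda_P$-adic form. The paper proves this converse in the proof of Theorem~\ref{thm:family}: it first constructs ${\bm\theta}(\bvarphi)_{\qexp}$ as an honest element of $\bigoplus_{\fC}\Lambda_P\llb\Her_n(K)_{\geq 0}\rrb\otimes_{\Lambda_P}\bI_{\bvarphi}$, shows it lies in $\pzM^{G',n-m}_{\Pord}\otimes_{\Lambda_P}{\rm Frac}(\Lambda_P)$ by a basis-plus-Zariski-density argument, and then removes denominators by showing, via the $q$-expansion principle and Nakayama's lemma, that the Fourier-coefficient functionals generate $\pzV^{G'}_{\Pord}\big(K^{G',p}_f,\cO_L\big)^*$ over $\Lambda_P$. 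This descent is the crux of the theorem and needs to be supplied.
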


To construct ${\bm \theta}(\bvarphi)$, we first construct an $\cO_L\llb T_{H,p}(\bZ_p)\rrb$-adic family of theta series, as $q$-expansions for $G'$ with coefficients inside the space \eqref{eq:MeasH} (which can be viewed as $\cO_L\llb T_{H,p}(\bZ_p)\rrb$-adic forms on $H$), associated to the Schwartz functions  $\phi^p_f\otimes \prod\nolimits_{\fp\mid p}\phi_{\uchi_\fp}\otimes\prod\nolimits_{\sigma:F\ra\bR} \phi^\sG_\sigma$. Then  we formulate a $p$-adic Petersson inner product on $H$ and apply it to the $\cO_L\llb T_{H,p}(\bZ_p)\rrb$-adic family of theta series and the given Hida family $\bvarphi$ on $H$. This produces an  $\cO_L\llb T_{H,p}(\bZ_p)\rrb$-adic family of $q$-expansions on $G'$. Then in the cases  where suitable Hida theory is available form \cite{HidaCon,PiHida,LRtz,Marcil},  we combine the $\bU_p$-equivariance of  $\phi_{\uchi_\fp}$ proved in Theorem~\ref{thm:Up-equiv} and Hida theory for $P$-ordinary forms to show that the constructed $\cO_L\llb T_{H,p}(\bZ_p)\rrb$-adic family of $q$-expansions on $G'$ come from a $P$-ordinary Hida family.

\vspace{1em}
\noindent{\bf Acknowledgments.} I would like to thank Ming-Lun Hsieh and Xiaoyu Zhang for helpful discussions. During the preparation of this paper, the  author was partially supported by the NSF grants DMS-2001527 and DMS-2501507.

\section{$\bU_\fp$-equivariant test Schwartz functions at $\fp$}\label{sec:UpSch}

\subsection{The setting and notation}\label{sec:UpSch-notation}
Let $p$ be an odd integer and $F_\fp$ be a finite extension of $\bQ_p$. Put $K_\fp=F_\fp$ or $F_\fp\oplus F_\fp$ or a quadratic field extension of $F_\fp$, and $\epsilon=0$ or $1$. For $a\in K_\fp$, we use $\bar{a}$ to denotes $a$ itself when $K_\fp=F_\fp$ and the image of $a$ under the nontrivial element in ${\rm Aut}(K_\fp/F_\fp)$ when $[K_\fp:F_\fp]=2$.

Fix positive integers $n\geq m$. With $K_\fp=F_\fp$ (resp. $[K_\fp:F_\fp]=2$), let 
\begin{align*}
   \Sp_{2n,\fp} \text{ (resp. $\U_{n,n,\fp}$)}&= \left\{g\in\GL_{2n}(K_\fp):\ltrans{\bar{g}} \begin{bmatrix}\phantom{0}&\bid_n\\ -\bid_n&\end{bmatrix} g=\begin{bmatrix}\phantom{0}&\bid_n\\ -\bid_n\end{bmatrix}\right\},
\end{align*}
and
\begin{align*}
  \OO_{2m+\epsilon,\fp}\text{ (resp. $\U_{2m+\epsilon,\fp}$)}
  &=\left\{g\in\GL_{2m}(K_\fp):\ltrans{\bar{g}} J_{2m+\epsilon} g= \begin{bmatrix}\phantom{0}&\bid_m\\ \bid_m\end{bmatrix}\right\}, 
\end{align*}
with 
\begin{equation}\label{eq:J}
    J_{2m+\epsilon}=\left\{\begin{array}{ll}
    \begin{bmatrix}\phantom{0}&\bid_m\\ \bid_m&\end{bmatrix}, & \epsilon=0, \\[2ex]
    \begin{bmatrix}\phantom{0}&\bid_m\\ \bid_m&\\&&1\end{bmatrix}, &\epsilon=1,
    \end{array}\right. 
\end{equation}
Let $\Mp_{2n,\fp}$ be the metaplectic double-cover of $\Sp_{2n,\fp}$, and we write it as $\Sp_{2n,\fp}\times\{\pm 1\}$ as a set.

Fix an additive character $\psi_\fp:F_\fp\ra\bC^\times$ with $\ker\psi_\fp=\cO_{F_\fp}$, the valuation of $F_\fp$.  We consider theta correspondence for the dual pair $(G_\fp, H_\fp)$, in the following four cases:
\begin{align*}
   (G_\fp,H_\fp)=\left\{\begin{array}{lll}  
   (\Sp_{2n,\fp},\OO_{2m,\fp}), & (K_\fp=F_\fp ,\epsilon=0),&\hspace{4em} \text{Case (OS0)},\\
   (\Mp_{2n,\fp},\OO_{2m+1,\fp}), & (K_\fp=F_\fp,\epsilon=1), & \hspace{4em}\text{Case (OS1)},\\
   (\U_{n,n,\fp},\U_{2m,\fp}), & ([K_\fp:F_\fp]=2,\epsilon=0), & \hspace{4em}\text{Case (U0)},\\
   (\U_{n,n,\fp},\U_{2m+1,\fp}), & ([K_\fp:F_\fp]=2,\epsilon=1), & \hspace{4em}\text{Case (U1)}.
   \end{array}\right.
\end{align*}

\vspace{.5em}

We introduce some notations. We denote by $\cO_\fp$ to denote the valuation ring of $K_\fp$. (When $K_\fp=F_\fp\oplus F_\fp$, $\cO_\fp=\cO_{F_\fp}\oplus \cO_{F_\fp}$.) Denote by $T_{G_\fp}\subset G_\fp$ and $T_{H_\fp}\subset H_\fp$ their maximal tori consisting of all diagonal matrices. (In case (OS1), we define $T_{G_\fp}$ to be the inverse image of the maximal torus of diagonal matrices in $\Sp_{2n,\fp}$ under the projection $\Mp_{2n,\fp}\ra \Sp_{2n,\fp}$.) For any positive integer $r$, let $B_{\GL_r,\fp}\subset \GL_r(K_\fp)$ be the standard Borel subgroup consisting of upper triangular matrices and $U_{\GL_r,\fp}\subset B_{\GL_r,\fp}$ the unipotent subgroup. Fix the following Borel subgroups of $G_\fp,H_\fp$: 
\begin{align*}
   B_{G_\fp}&=\left\{\begin{array}{ll}\left\{\begin{bsm} \ast&\cdots &\ast&\ast&\cdots&\ast\\ 
   &\ddots&\vdots&\vdots&\ddots&\vdots\\&&\ast&\ast&\cdots&\ast\\ 
   &&&\ast\\
   &&&\vdots&\ddots\\
   &&&\ast&\cdots&\ast\end{bsm}\in G_\fp\right\}, 
   & \text{Cases (OS0)(U0)(U1)},\\[5ex]
   \left\{(g,\pm 1):g=\begin{bsm} \ast&\cdots &\ast&\ast&\cdots&\ast\\ 
   &\ddots&\vdots&\vdots&\ddots&\vdots\\&&\ast&\ast&\cdots&\ast\\ 
   &&&\ast\\
   &&&\vdots&\ddots\\
   &&&\ast&\cdots&\ast\end{bsm}\in \Sp_{2n,\fp}\right\}, &\text{Case (OS1)},
   \end{array}\right.\\
   B_{H_\fp}&=\left\{\begin{array}{ll}\left\{\begin{bsm} \ast&\cdots &\ast&\ast&\cdots&\ast\\ 
   &\ddots&\vdots&\vdots&\ddots&\vdots\\&&\ast&\ast&\cdots&\ast\\ 
   &&&\ast\\
   &&&\vdots&\ddots\\
   &&&\ast&\cdots&\ast\end{bsm}\in H_\fp\right\}, & \text{Case (OS0)(U0)},\\[5ex]
   \left\{\begin{bsm} \ast&\cdots &\ast&\ast&\cdots&\ast&\ast\\
    &\ddots&\vdots&\vdots&\ddots&\vdots&\vdots\\
    &&\ast&\ast&\cdots&\ast&\ast\\ 
    &&&\ast \\ 
    &&&\vdots&\ddots\\&&&\ast&\cdots&\ast\\ 
    &&&\ast&\cdots&\ast&\ast\end{bsm}\in H_\fp\right\}, & \text{Case (OS1)(U1)},\end{array}\right.
\end{align*}
and denote by $U_{G_\fp},U_{H_\fp}$ their unipotent subgroups. Put
\begin{align*}
   B_{\GL_r,\cO_p}&=B_{\GL_r,\fp}\cap\GL_r(\cO_\fp),
   &B_{G_\fp,\cO_\fp}&= B_{G_\fp}\cap  \GL_{2n}(\cO_\fp),
   &B_{H_\fp,\cO_\fp}&= B_{H_\fp}\cap  \GL_{2n}(\cO_\fp),\\
   U_{\GL_r,\cO_p}&=U_{\GL_r,\fp}\cap\GL_r(\cO_\fp),
   &U_{G_\fp,\cO_\fp}&= U_{G_\fp}\cap  \GL_{2n}(\cO_\fp),
   &U_{H_\fp,\cO_\fp}&= U_{H_\fp}\cap  \GL_{2m+\epsilon}(\cO_\fp).
\end{align*}
(In case (OS1), $B_{G_\fp,\cO_\fp}=B_{G_\fp}\cap\left( \GL_{2n}(\cO_p)\times\{\pm 1\}\right)$, $U_{G_\fp,\cO_\fp}=U_{G_\fp}\cap \left(\GL_{2n}(\cO_p)\times\{1\}\right)$.) Also, we let $B^-_{\ast\ast}$, $U^-_{\ast\ast}$ be the corresponding opposite Borel and unipotent subgroups, {\it i.e.} consisting of elements whose transpose belong to $B_{\ast\ast}$, $U_{\ast\ast}$.  We fix Haar measures on $U_{H_\fp,\cO_\fp},U_{G_\fp,\cO_\fp},U_{\GL_n,\cO_\fp}$ with total volumes equal to $1$.

\subsection{Some formulas for Weil representations}\label{sec:WeilRep}
We recall the formulas for the action of $H_\fp$ and the standard Siegel parabolic subgroup of $G_\fp$ in the Schr\"{o}dinger model of Weil representation $\omega_{\psi_\fp}$. Let $M_{2m+\epsilon,n}(K_\fp)$ be the set of $(2m+\epsilon)$-by-$n$ matrices with entries in $K_\fp$. For a Schwartz function $\phi$ on $M_{2m+\epsilon,n}(K_\fp)$,  $h\in H_\fp$ and $\begin{bmatrix}A&B\\0&\ltrans{\bar{A}}^{-1}\end{bmatrix}\in G_\fp$ (and $\left(\begin{bmatrix}A&B\\0&\ltrans{\bar{A}}^{-1}\end{bmatrix},\pm 1\right)\in G_\fp$ in Case (OS1)), we have
\[
   \omega_{\psi_\fp}(\bid_{G_\fp},h)\phi(X) =\phi(\ltrans{hX}),
\]
and
\begin{align*}
   &\omega_{\psi_\fp}\left(\begin{bmatrix}A&B\\0&\ltrans{\bar{A}}^{-1}\end{bmatrix},\bid_{H_\fp}\right)\phi(X)
   && \text{Case (OS0)(U0)(U1)}\\
   =&\,\psi_\fp\left(\frac{1}{2}\Tr B\ltrans{\bar{A}} \ltrans{\bar{X}} J_{2m+\epsilon} X\right)
   \cdot \tilde{\chi}^\epsilon_{K_\fp/F_\fp}(\det A) |\det A|^{\frac{2m+\epsilon}{2}}_\fp \phi(XA),\\
   &\omega_{\psi_\fp}\left(\left(\begin{bmatrix}A&B\\0&\ltrans{A}^{-1}\end{bmatrix},\pm 1\right),\bid_{H_\fp}\right)\phi (X)
   &&\text{Case (OS1)}\\
   =&\,\pm\psi_\fp\left(\frac{1}{2}\Tr B\ltrans{\bar{A}} \ltrans{\bar{X}} J_{2m+1} X\right)
   \cdot  \gamma(\det A,\frac{1}{2}\psi_\fp)^{-1}|\det A|^{\frac{2m+1}{2}}_\fp \phi(XA), 
\end{align*}
with
\[
    \tilde{\chi}_{K_\fp/F_\fp}=\left\{\begin{array}{ll} \triv, &\text{Case (OS0)(OS1)},\\[1ex] \text{a character of $K^\times_\fp$ extending $\chi_{K_\fp/F_\fp}:F^\times_\fp\ra\bC^\times$},& \text{Case (U0)(U1)},\end{array}\right.
\]
and $\gamma(\det A,\frac{1}{2}\psi_\fp)$ is defined as in \cite[Proposition~A.11]{Rao}.



\subsection{The $\bU_p$-operators}\label{sec:Up-oper}

We are interested in selecting test Schwartz functions on $M_{2m+\epsilon,n}(K_\fp)$ possessing nice properties with respect to the action of $\bU_p$-operators so that they are well-suited for constructing $p$-adic families of theta lifts.

We define $\bU_p$-operators indexed by the following semigroups
\begin{align}
   \label{eq:T+GH}  T^+_{G_\fp}&=\left\{\ttt\in T_{G_\fp}:\ttt U_{G_\fp,\cO_\fp}\ttt^{-1}\subset U_{G_\fp,\cO_\fp}\right\},
   &T^+_{H_\fp}&=\left\{\ttt\in T_{H_\fp}:\ttt U_{H_\fp,\cO_\fp}\ttt^{-1}\subset U_{H_\fp,\cO_\fp}\right\}.
\end{align}
By the definition, given
\begin{align*}
   \diag(a_1,\dots,a_n,\bar{a}^{-1}_1,\dots,\bar{a}^{-1}_n)&\in T_{G_\fp}, &&\text{Case (OS0)(U0)(U1)}\\
   \left(\diag(a_1,\dots,a_n,\bar{a}^{-1}_1,\dots,\bar{a}^{-1}_n),\pm 1\right)&\in T_{G_\fp}, &&\text{Case (OS1)}
\end{align*}
it belongs to $T^+_{G\fp}$ iff $a_ja^{-1}_{j+1}\in \cO_\fp$, $1\leq j\leq n-1$, and $a_n\bar{a}_n\in \cO_\fp$, and given 
\begin{align*}
   \diag(a_1,\dots,a_m ,\bar{a}^{-1}_1,\dots,\bar{a}^{-1}_m)&  \in T_{H_\fp}, &&\text{Case (OS0)(U0)}\\
   \diag((a_1,\dots,a_m ,\bar{a}^{-1}_1,\dots,\bar{a}^{-1}_m,u)&  \in T_{H_\fp} , &&\text{Case (OS1)(U1)} 
\end{align*}
it belongs to $T^+_{H_\fp}$ iff $a_ja^{-1}_{j+1}\in \cO_\fp$, $1\leq j\leq m-1$, and $a_m\bar{a}_m,a_mu^{-1}\in \cO_\fp$.

Given $\ttt\in T^+_{G_\fp}$ we define the (unnormalized) $\bU_p$-operator $U^{G_\fp}_{\fp,\ttt}$ as
\begin{align}
   \label{eq:UGp} U^{G_\fp}_{\fp,\ttt}=\int_{U_{G_\fp,\cO_\fp}}  (\text{action by $u\ttt$}) \, du 
   = \vol\left(\ttt U_{G_\fp,\cO_\fp}\ttt^{-1} \right)\hspace{-2em} \sum_{u\in U_{G_\fp,\cO_\fp}/\ttt U_{G_\fp,\cO_\fp}\ttt^{-1}} \text{action by $u\ttt$}.
\end{align}
Similarly, for $\ttt\in T^+_{H_\fp}$, define
\begin{align}
   \label{eq:UHp} U^{H_\fp}_{\fp,\ttt}=\int_{U_{H_\fp,\cO_\fp}}  (\text{action by $u\ttt$}) \, du 
   = \vol\left(\ttt U_{H_\fp,\cO_\fp}\ttt^{-1} \right)\hspace{-2em} \sum_{u\in U_{H_\fp,\cO_\fp}/\ttt U_{H_\fp,\cO_\fp}\ttt^{-1}} \text{action by $u\ttt$}.
\end{align}

\subsection{A special type of test Schwartz functions}\label{sec:Schwartz}
Given an $m$-tuple $\uchi_\fp=(\chi_{\fp,1},\chi_{\fp,2},\dots,\chi_{\fp,m})$ of finite-order characters of $\cO^\times_\fp$, define the continuous function $\alpha_{\uchi_\fp}$ on $M_{m,m}(\cK_\fp)$  as
\begin{align*}
   \alpha_{\uchi_\fp}\begin{pmatrix}
   x_{11}&x_{12}&\cdots&x_{1m}\\
   x_{21}&x_{22}&\cdots&x_{2m}\\
   \vdots&\vdots&\ddots&\vdots\\
   x_{m1}&x_{m2}&\cdots&x_{mm}
   \end{pmatrix}
   &= \prod_{j=1}^m 
   \chi_{\fp,j}\left(\det\begin{pmatrix}
   x_{11}&\cdots&x_{1j}\\
   \vdots&\ddots&\vdots\\
   x_{j1}&\cdots&x_{jj}\end{pmatrix} \right),
\end{align*}
and the Schwartz function $\phi_{\uchi_\fp}$ on $M_{2m,n+\epsilon}(K_\fp)$ as

\begin{align*}
   \phi_{\uchi_\fp}\left(\begin{blockarray}{ccc}m&n-m\\ \begin{block}{[cc]c}X_1&X_2&m\\X_3&X_4&m+\epsilon\\ \end{block}\end{blockarray}\right)
   =&\, \alpha_{\uchi_\fp}(X_1)
   \cdot \mathds{1}_{B^-_m(\cO_{\fp})B_m(\cO_{\fp})}(X_1)\\
   &\times \mathds{1}_{M_{m,n-m}(\cO_{\fp})}(X_2)\cdot \mathds{1}_{M_{m+\epsilon,n}(\cO_{\fp})}(X_3,X_4).
\end{align*}
In next section, we show that Schwartz functions of such shapes satisfy a nice equivariance property for the action of the $\bU_\fp$-operators for $G_\fp$ and $H_\fp$ defined in \S\ref{sec:Up-oper}.

\subsection{The $\bU_p$-equivariance}

\begin{thm}\label{thm:local}
For
\[
   \ttt=\left\{\begin{array}{ll}\diag(a_1,\dots,a_m ,\bar{a}^{-1}_1,\dots,\bar{a}^{-1}_m)\in T^+_{H_\fp}, &\textup{Case (OS0)(U0)}\\[1ex]
   \diag((a_1,\dots,a_m ,\bar{a}^{-1}_1,\dots,\bar{a}^{-1}_m,u)\in T^+_{H_\fp}, &\textup{Case (OS1)(U1)} \end{array}\right.
\]
and
\[
   \breve{\ttt}=\left\{\begin{array}{ll}\diag(a_1,\dots,a_m,d\dots,d,\bar{a}^{-1}_1,\dots,\bar{a}^{-1}_m,\bar{d}^{-1},\dots,\bar{d}^{-1})\in T^+_{G_\fp}, &\textup{Case (OS0)(U0)(U1)}\\[1ex]
   \left(\diag(a_1,\dots,a_m,d\dots,d,\bar{a}^{-1}_1,\dots,\bar{a}^{-1}_m,\bar{d}^{-1},\dots,\bar{d}^{-1}),1\right)\in T^+_{G_\fp}, &\textup{Case (OS1)}
   \end{array}\right.
\]
where for \textup{Case (U1}) with $n>m$ and $K_\fp=F_\fp\oplus F_\fp$, we further assume $du^{-1}\in \cO^\times_\fp$, the  Schwartz function $\phi_{\uchi_\fp}$  defined in \S\ref{sec:Schwartz} satisfies
\begin{align*}
   &\vol(\ttt U_{H_\fp,\cO_\fp}\ttt^{-1})^{-1}\cdot U^{H_\fp}_{\fp,\ttt}\phi_{\uchi_\fp} \\
   = &\,\left(\tilde{\chi}_{K_\fp/F_\fp}(\det A(\breve{\ttt})) \gamma(\det(A(\breve{\ttt}),\frac{1}{2}\psi_\fp)\right)^\epsilon
   \cdot |\det A(\breve{\ttt})|^{\frac{2m+\epsilon}{2}}_\fp\vol(A(\breve{\ttt}) U_{\GL_n,\cO_\fp}A(\breve{\ttt})^{-1})^{-1}\cdot U^{G_\fp}_{\fp,\breve{\ttt}}\phi_{\uchi_\fp},
\end{align*}
with $A(\breve{\ttt})=\diag(a_1,\dots,a_m,d,\dots,d)\in\GL_n(K_\fp)$.
\end{thm}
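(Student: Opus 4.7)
The strategy is to compute both sides of the identity pointwise in $X$ by applying the Weil-representation formulas of \S\ref{sec:WeilRep}, reducing each to an explicit Schwartz function of $X$, and then matching them.

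For the $H_\fp$-side, starting from
\[U^{H_\fp}_{\fp,\ttt}\phi_{\uchi_\fp}(X)=\int_{U_{H_\fp,\cO_\fp}}\phi_{\uchi_\fp}(\ltrans{(v\ttt)}X)\,dv,\]
I parametrize $v$ via the Levi--unipotent-radical decomposition of the Siegel parabolic of $H_\fp$, writing $v=v_L v_N$ with $v_L$ coming from $U_{\GL_m,\cO_\fp}$ (together with a torus factor in cases (OS1)(U1)) and $v_N$ in the unipotent radical. Left multiplication by $\ltrans{v_L}$ acts on the first $m$ rows of $X$ by a lower-triangular unipotent matrix, and therefore preserves the leading principal minors $D_j(X_1)$ (hence $\alpha_{\uchi_\fp}(X_1)$) as well as the big-cell condition $X_1\in B^-_m(\cO_\fp)B_m(\cO_\fp)$; so the $v_L$-integration contributes only $\vol(U_{\GL_m,\cO_\fp})=1$. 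The $v_N$-integration, combined with the support of $\phi_{\uchi_\fp}$, yields an explicit finite sum of translates with integrality conditions on the lower $m+\epsilon$ rows of $X$.

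For the $G_\fp$-side, I write $u\in U_{G_\fp,\cO_\fp}$ in Siegel form as $u=\begin{bsm}A_u & A_uC_u\\ 0 & \ltrans{\bar{A_u}}^{-1}\end{bsm}$ with $A_u\in U_{\GL_n,\cO_\fp}$ and $C_u$ Hermitian/symmetric in $M_{n,n}(\cO_\fp)$. The Siegel-parabolic formula of \S\ref{sec:WeilRep} then gives
\[\omega_{\psi_\fp}(u\breve{\ttt})\phi_{\uchi_\fp}(X)=W(\breve{\ttt})\cdot\psi_\fp\big(\tfrac{1}{2}\Tr C_u\,\ltrans{\bar{(XA_u)}}J_{2m+\epsilon}(XA_u)\big)\cdot\phi_{\uchi_\fp}(XA_uA(\breve{\ttt})),\]
where the Weil factor $W(\breve{\ttt})$ equals $\tilde{\chi}^\epsilon_{K_\fp/F_\fp}(\det A(\breve{\ttt}))|\det A(\breve{\ttt})|^{(2m+\epsilon)/2}_\fp$ in cases (OS0)(U0)(U1), and $\gamma(\det A(\breve{\ttt}),\tfrac{1}{2}\psi_\fp)^{-1}|\det A(\breve{\ttt})|^{(2m+1)/2}_\fp$ in case (OS1). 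On the support of $\phi_{\uchi_\fp}(XA_uA(\breve{\ttt}))$ one has $XA_u\in M_{2m+\epsilon,n}(\cO_\fp)$, hence $\ltrans{\bar{(XA_u)}}J_{2m+\epsilon}(XA_u)\in M_{n,n}(\cO_\fp)$, so the $C_u$-integration collapses to the total volume $1$. By right-$U_{\GL_n,\cO_\fp}$-invariance of $\phi_{\uchi_\fp}$ (which follows from the same invariance of $\alpha_{\uchi_\fp}$ under right multiplication by upper-triangular unipotents and from the big-cell/integrality structure of the other blocks), the remaining $A_u$-integral reduces to a sum over $U_{\GL_n,\cO_\fp}/A(\breve{\ttt})U_{\GL_n,\cO_\fp}A(\breve{\ttt})^{-1}$ weighted by $\vol(A(\breve{\ttt})U_{\GL_n,\cO_\fp}A(\breve{\ttt})^{-1})$. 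The prefactor $(\tilde{\chi}_{K_\fp/F_\fp}(\det A(\breve{\ttt}))\gamma(\det A(\breve{\ttt}),\tfrac{1}{2}\psi_\fp))^\epsilon|\det A(\breve{\ttt})|^{(2m+\epsilon)/2}_\fp$ in the statement is calibrated so that its product with $W(\breve{\ttt})$ produces the scalar needed to match the $H_\fp$-side.

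The final step is to match the two simplified expressions: both are sums of translates of $\phi_{\uchi_\fp}$ with equal weights, by $X\mapsto\ltrans{\ttt}\ltrans{v_N}X$ on the $H$-side and by $X\mapsto XA_uA(\breve{\ttt})$ on the $G$-side. The key observation is that $\ltrans{\ttt}$ and $A(\breve{\ttt})$ act on the leading principal minor $D_j(X_1)$ by the same scalar $a_1\cdots a_j$, while the residual unipotent parts $\ltrans{v_N}$ and $A_u$ leave these minors intact because of the big-cell support; the remaining integrality conditions on the $X_2,X_3,X_4$ blocks then match by direct block-level bookkeeping. The main obstacle will be precisely this bookkeeping in cases (OS1) and (U1), where an extra torus coordinate $u\in T^+_{H_\fp}$ enters and, in case (OS1), the $\gamma$-factor in the Weil formula must be tracked; the auxiliary hypothesis $du^{-1}\in\cO^\times_\fp$ in case (U1) with $K_\fp=F_\fp\oplus F_\fp$ is needed precisely to keep the $(2m+1)$-st row of $X$ under control on both sides.
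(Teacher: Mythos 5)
Your overall skeleton (apply the Weil-representation formulas on both sides, strip off the common factor $\alpha_{\uchi_\fp}$, and reduce to comparing two normalized averages of indicator functions) is the same as the paper's, but your treatment of the $G_\fp$-side contains an error that breaks the proof. You assert that on the support one has $XA_u\in M_{2m+\epsilon,n}(\cO_\fp)$, so that the integration over $C_u$ (the unipotent radical of the Siegel parabolic) collapses to total volume $1$. This is false: the support condition constrains $XA_uA(\breve{\ttt})$, not $XA_u$, and since $A(\breve{\ttt})$ lies in $T^+$ the former being integral does not force the latter to be. Already for $n=m=1$ in Case (OS0) with $a_1=p$, the vector $X=\ltrans{(p^{-1},p^{-1})}$ satisfies $XA(\breve{\ttt})=\ltrans{(1,1)}\in V_{\supp}$, yet $\ltrans{\bar{X}}J_2X=2p^{-2}\notin\cO_\fp$. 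What the $C_u$-integration actually produces is the additional indicator $\mathds{1}_{\Her_n(\cO_\fp)}\left(\ltrans{\bar{X}}J_{2m+\epsilon}X\right)$, and this constraint genuinely shrinks the $G_\fp$-side: in the example the $H_\fp$-side average is supported on $\left\{\ltrans{(p^{-1}v,\,pc)}:v\in\cO^\times_\fp,\,c\in\cO_\fp\right\}$, whereas your version of the $G_\fp$-side would be supported on the strictly larger set $\left\{\ltrans{(p^{-1}v,\,p^{-1}c)}\right\}$. So the identity your argument would yield is false as stated.

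Relatedly, the ``direct block-level bookkeeping'' you defer to at the end is where the entire content of the theorem lives, and it cannot be carried out without the Hermitian condition you discarded. In the paper, the hard inclusion $\supp\,\phi_{G_\fp,\breve{\ttt}}\subset\supp\,\phi_{H_\fp,\ttt}$ is proved by the explicit factorization
\[
X=\begin{bsm}\bid_m&0\\ \tfrac{Y_3-\ltrans{\bar{Y}}_3}{2}&\bid_m\end{bsm}\cdot\ttt^{-1}\begin{bsm}a_{\ttt}&\\ &\bar{a}^{-1}_{\ttt}\end{bsm}\begin{bsm}\bid_m&0\\ \tfrac{Y_3+\ltrans{\bar{Y}}_3}{2}&Y_4\end{bsm}\begin{bsm}a^{-1}_{\ttt}&\\ &d^{-1}\bid_{n-m}\end{bsm},
\]
and the integrality of $\bar{a}^{-1}_{\ttt}(Y_3+\ltrans{\bar{Y}}_3)a^{-1}_{\ttt}$ and $\bar{a}^{-1}_{\ttt}Y_4d^{-1}$ --- exactly what is needed to land in $U^-_{H_\fp,\cO_\fp}\,\ttt^{-1}(\cdots)$ --- is read off precisely from $\ltrans{\bar{X}}J_{2m+\epsilon}X\in\Her_n(\cO_\fp)$. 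You also omit the intermediate step that both normalized averages are $\{0,1\}$-valued (each equals the characteristic function of its support), which is what licenses concluding from a comparison of supports alone. Restore the Hermitian indicator on the $G_\fp$-side and supply these two steps, and your outline becomes the paper's proof.
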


\begin{proof}
Let $V_{\supp}=\begin{bmatrix} B^-_{\GL_m,\cO_\fp}B_{\GL_m,\cO_\fp}& M_{n-m,m}(\cO_\fp)\\ M_{m+\epsilon,m}(\cO_\fp)& M_{m+\epsilon,n-m}(\cO_\fp)\end{bmatrix}\subset M_{2m+\epsilon,n}(K_\fp)$. 
By the definition of $U^{G_\fp}_{\fp,\breve{\ttt}},U^{H_\fp}_{\fp,\ttt}$ in \eqref{eq:UGp}\eqref{eq:UHp} and the formulas  for Weil representations recalled in \S\ref{sec:WeilRep}, we have
\begin{align*}
   \left(U^{H_\fp}_{\fp,\ttt}\phi_{\uchi_\fp}\right)(X)
   &=\int_{U_{H_\fp,\cO_\fp}} \phi_{\uchi_\fp}\left(\ltrans{\ttt} \ltrans{u} X\right)\,du\\
   &= \alpha_{\uchi_\fp}(\ttt X) \int_{U_{H_\fp,\cO_\fp}} \mathds{1}_{V_{\supp}}\left(\ltrans{\ttt}\ltrans{u} X\right)\,du 
\end{align*}
and
\begin{align*}
   \left(U^{G_\fp}_{\fp,\breve{\ttt}}\phi_{\uchi_\fp}\right)(X) 
   = &\,\left(\tilde{\chi}_{K_\fp/F_\fp}(\det A(\breve{\ttt})) \gamma(\det(A(\breve{\ttt}),\frac{1}{2}\psi_\fp)\right)^\epsilon
   \cdot |\det A(\breve{\ttt})|^{\frac{2m+\epsilon}{2}}_\fp\\
     &\times \int_{\Her_n(\cO_\fp)}\psi_\fp\left(\Tr\sigma\ltrans{\bar{X}}J_{2m+\epsilon}X\right)\,d\sigma\int_{U_{\GL_n,\cO_\fp}} \phi_{\uchi_\fp}\left(XuA(\breve{\ttt})\right)\,du\\
   = &\,\left(\tilde{\chi}_{K_\fp/F_\fp}(\det A(\breve{\ttt})) \gamma(\det(A(\breve{\ttt}),\frac{1}{2}\psi_\fp)\right)^\epsilon
   \cdot |\det A(\breve{\ttt})|^{\frac{2m+\epsilon}{2}}_\fp\\ 
   &\times \alpha_{\uchi_\fp}(\ttt X) \cdot \mathds{1}_{\Her_n(\cO_\fp)} \left(\ltrans{\bar{X}} J_{2m+\epsilon} X\right) \int_{U_{\GL_n,\cO_\fp}} \mathds{1}_{V_\supp}\left(XuA(\breve{\ttt})\right)\,du
\end{align*}
with $J_{2m+\epsilon}$ given in \eqref{eq:J} and $\Her_n(\cO_\fp)=\{\sigma\in M_{n,n}(\cO_\fp):\sigma=\ltrans{\bar{\sigma}}\}.$

Defining functions $\phi_{H_\fp,\ttt},\phi_{G_\fp,\breve{\ttt}}$ on $M_{2m+\epsilon,n}(K_\fp)$ as
\begin{align*}
   \phi_{H_\fp,\ttt}(X)&=\vol(\ttt U_{H_\fp,\cO_\fp}\ttt^{-1})^{-1}\cdot  \int_{U_{H_\fp,\cO_\fp}} \mathds{1}_{V_{\supp}}\left(\ttt\ltrans{u} X\right)\,du \\
    \phi_{G_\fp,\breve{\ttt}}(X)&=\vol(A(\breve{\ttt}) U_{\GL_n,\cO_\fp}A(\breve{\ttt})^{-1})^{-1}
    \cdot \mathds{1}_{\Her_n(\cO_\fp)} \left(\ltrans{\bar{X}} J_{2m+\epsilon} X\right) \int_{U_{\GL_n,\cO_\fp}} \mathds{1}_{V_\supp}\left(XuA(\breve{\ttt})\right)\,du,
\end{align*}
we reduce to showing that 
\[
   \phi_{H_\fp,\ttt}=\phi_{G_\fp,\breve{\ttt}}.
\]
We will first show that both sides equal to the characteristic functions of their supports, and then show that they have the same support. \\

\noindent\underline{$\phi_{H_\fp,\ttt}=\mathds{1}_{\supp\, \phi_{H_\fp,\ttt}}$, $\phi_{G_\fp,\breve{\ttt}}=\mathds{1}_{\supp\, \phi_{G_\fp,\breve{\ttt}}}$.} \\[-1ex]

Given  $X\in \supp\,\phi_{H_\fp,\ttt}$, taking $u_0\in U_{H_{\fp,\cO_p}}$ such that $Y=\ttt\ltrans{u}_0 X\in V_{\supp}$, we have
\begin{align*}
    \phi_{H_\fp,\ttt}(X)
    &=\vol(\ttt U_{H_\fp,\cO_\fp}\ttt^{-1})^{-1}\cdot \vol\left(\{u\in U_{H_\fp,\cO_\fp}:\ttt\ltrans{u} X\in V_{\supp}\}\right)\\
    &=\vol(\ttt U_{H_\fp,\cO_\fp}\ttt^{-1})^{-1}\cdot \vol\left(\{u\in U_{H_\fp,\cO_\fp}: \ttt\ltrans{(u_0^{-1}u)}\ttt^{-1}Y\in V_{\supp}\}\right).
\end{align*}
It is easy to see that for $u^-\in U^-_{H_\fp}$ and $Y\in V_\supp$, $u^-Y\in  V_\supp$ if and only if $u^-\in U^-_{H_\fp,\cO_\fp}$. Hence,
\begin{align*}
    \phi_{H_\fp,\ttt}(X)
    &=\vol(\ttt U_{H_\fp,\cO_\fp}\ttt^{-1})^{-1} \cdot \vol\left(\{u\in U_{H_\fp,\cO_\fp}: \ttt\ltrans{(u_0^{-1}u)}\ttt^{-1}\in U^-_{H_\fp,\cO_\fp}\}\right)\\
    &=\vol(\ttt U_{H_\fp,\cO_\fp}\ttt^{-1})^{-1} \cdot \vol\left(\ttt^{-1} U^-_{H_\fp,\cO_\fp}\ttt\right)=1.
\end{align*}
This shows that  $\phi_{H_\fp,\ttt}=\mathds{1}_{\supp\, \phi_{H_\fp,\ttt}}$. Similarly, given  
$X\in \supp\,\phi_{G_\fp,\breve{\ttt}}$, taking $u_0\in U_{\GL_n,\cO_\fp}$ such that $Y=Xu_0A(\breve{\ttt})\in  V_{\supp}$, we have
\begin{align*}
   \phi_{G_\fp,\breve{\ttt}}(X)
   &=\vol(A(\breve{\ttt}) U_{\GL_n,\cO_\fp}A(\breve{\ttt})^{-1})^{-1}
   \cdot \vol\left(\left\{u\in U_{\GL_n,\cO_\fp}: XuA(\breve{\ttt})\in  V_{\supp}\right\}\right)\\
    &=\vol(A(\breve{\ttt}) U_{\GL_n,\cO_\fp}A(\breve{\ttt})^{-1})^{-1}
    \cdot\vol\left(\{u\in U_{\GL_n,\cO_\fp}: YA(\breve{\ttt})^{-1}(u_0^{-1}u)A(\breve{\ttt})\in V_{\supp} \}\right)
\end{align*}
For $u\in U_{\GL_n,\cO_\fp}$, we can write
\begin{align}
   \label{eq:A-1uA}  A(\breve{\ttt})^{-1}u^{-1}_0uA(\breve{\ttt})
    &=\begin{bmatrix}w_1&w_2\\0&w_4\end{bmatrix},
    &w_1\in U_{\GL_m,\fp},\,w_2\in M_{m,n-m}(K_\fp),\,w_4\in U_{\GL_{n-m},\cO_\fp}.
\end{align}
Then $Y\in V_{\supp}$ and $Y\begin{bmatrix}w_1&w_2\\0&w_4\end{bmatrix}\in V_{\supp}$ if and only if $w_1\in U_{m,\cO_\fp},w_2\in M_{m,n-m}(\cO_\fp)$, {\it i.e.} if and only if $\eqref{eq:A-1uA}\in U_{\GL_n,\cO_\fp}$. Hence, 
\begin{align*}
   \phi_{G_\fp,\breve{\ttt}}(X)
   = \vol(A(\breve{\ttt}) U_{\GL_n,\cO_\fp}A(\breve{\ttt})^{-1})^{-1}
   \cdot \vol\left(\{u\in U_{\GL_n,\cO_\fp}: A(\breve{\ttt})^{-1}u_0^{-1}uA(\breve{\ttt})\in  U_{\GL_n,\cO_\fp}\}\right)=1,
\end{align*}
and $\phi_{G_\fp,\breve{\ttt}}=\mathds{1}_{\supp\, \phi_{G_\fp,\breve{\ttt}}}$.\\

\noindent\underline{$\supp\, \phi_{H_\fp,\ttt}= \supp\, \phi_{G_\fp,\breve{\ttt}}$.}\\[-1ex]

We have
\begin{align*}
   \supp\, \phi_{H_\fp,\ttt}
   &= U^-_{H_\fp,\cO_\fp} \, \ttt^{-1}\begin{bmatrix} B^-_{\GL_m,\cO_\fp}B_{\GL_m,\cO_\fp}& M_{n-m,m}(\cO_\fp)\\ M_{m+\epsilon,m}(\cO_\fp)& M_{m+\epsilon,n-m}(\cO_\fp)\end{bmatrix}\\
   &= U^-_{H_\fp,\cO_\fp} \, \ttt^{-1}\begin{bmatrix} B_{\GL_m,\cO_\fp}& M_{n-m,m}(\cO_\fp)\\ M_{m+\epsilon,m}(\cO_\fp)& M_{m+\epsilon,n-m}(\cO_\fp)\end{bmatrix}\\
   &= U^-_{H_\fp,\cO_\fp} \, \ttt^{-1}\begin{bmatrix} \bid_m& 0\\ M_{m+\epsilon,m}(\cO_\fp)& M_{m+\epsilon,n-m}(\cO_\fp)\end{bmatrix} B_{\GL_n,\cO_\fp},
\end{align*}
and
\begin{align*}
    &\supp\, \phi_{G_\fp,\breve{\ttt}}\\
    = &\,\left\{X\in M_{2m+\epsilon,n}(K_\fp):\ltrans{\bar{X}} J_{2m+\epsilon} X\in \Her_n(\cO_\fp)\right\}   
    \cap \begin{bmatrix} B^-_{\GL_m,\cO_\fp}B_{\GL_m,\cO_\fp}& M_{n-m,m}(\cO_\fp)\\ M_{m+\epsilon,m}(\cO_\fp)& M_{m+\epsilon,n-m}(\cO_\fp)\end{bmatrix}A(\breve{\ttt})^{-1} U_{\GL_n,\cO_\fp}  \\
     = &\,\left\{X\in M_{2m+\epsilon,n}(K_\fp):\ltrans{\bar{X}} J_{2m+\epsilon} X\in \Her_n(\cO_\fp)\right\}    
    \cap U^-_{H_\fp,\cO_\fp}\begin{bmatrix} \bid_m&0 \\ M_{m+\epsilon,m}(\cO_\fp)& M_{m+\epsilon,n-m}(\cO_\fp)\end{bmatrix}A(\breve{\ttt})^{-1} B_{\GL_n,\cO_\fp}.\\[-1ex]  
\end{align*}

We first check that $\supp\, \phi_{H_\fp,\ttt}\subset \supp\, \phi_{G_\fp,\breve{\ttt}}$, which is easy. Given $X=u^-\ttt^{-1} Y b\in \supp\, \phi_{H_\fp,\ttt}$ with $u^-\in U^-_{H_\fp,\cO_\fp},b\in U_{\GL_n,\cO_\fp},Y\in  \begin{bsm} \bid_m& 0\\ M_{m+\epsilon,m}(\cO_\fp)& M_{m+\epsilon,n-m}(\cO_\fp)\end{bsm}$, 
\[
    \ltrans{\bar{X}}J_{2m+\epsilon}X=\ltrans{\bar{b}}\ltrans{\bar{Y}} J_{2m+\epsilon}Y b \in \Her_n(\cO_\fp),
\]
and $\ttt\in T^+_{H_\fp},\breve{\ttt}\in T^+_{G_\fp}$, plus the condition on $u,d$ in Case (U0) when $n>m$, $K_\fp=F_\fp\oplus F_\fp$, implies that
\begin{align*}
    \ttt^{-1}Y\in  \ttt^{-1} \begin{bmatrix} \bid_m& 0\\ M_{m+\epsilon,m}(\cO_\fp)& M_{m+\epsilon,n-m}(\cO_\fp)\end{bmatrix}
     \subset \begin{bmatrix} \bid_m&0 \\ M_{m+\epsilon,m}(\cO_\fp)& M_{m+\epsilon,n-m}(\cO_\fp)\end{bmatrix}A(\breve{\ttt})^{-1}.
\end{align*}
It follows that $X=u^-\ttt^{-1}Yb\in U^-_{H_\fp,\cO_\fp}\begin{bsm} \bid_m&0 \\ M_{m+\epsilon,m}(\cO_\fp)& M_{m+\epsilon,n-m}(\cO_\fp)\end{bsm}A(\breve{\ttt})^{-1} B_{\GL_n,\cO_\fp}$. Thus, $X\in \supp\,\phi_{G_\fp,\breve{\ttt}}$.\\

It remains to checking that $\supp\, \phi_{G_\fp,\breve{\ttt}}\subset \supp\, \phi_{H_\fp,\ttt}$, for which it suffices to show that every \begin{equation}\label{eq:Xbelong}
   X\in \left\{\begin{array}{ll}\begin{bmatrix} \bid_m&0 \\ M_{m,m}(\cO_\fp)& M_{m,n-m}(\cO_\fp)\end{bmatrix}A(\breve{\ttt})^{-1} 
    &\text{Case (OS0)(U0)}\\[3ex]
  \begin{bmatrix} \bid_m&0 \\ M_{m,m}(\cO_\fp)& M_{m,n-m}(\cO_\fp)\\ 0&M_{1,n-m}(\cO_\fp)\end{bmatrix}A(\breve{\ttt})^{-1}   &\text{Case (OS1)(U1)}
  \end{array}\right.
  \text{  with  } 
   X^* J_{2m+\epsilon} X  \in \Her_n(\cO_\fp),
\end{equation} 
belongs to $\supp\,\phi_{H_\fp,\ttt}$. (After the left translation by certain element in $U^-_{H_\fp,\cO_\fp}$ and the right translation by certain element in  $U_{\GL_n,\cO_\fp} \subset A(\breve{\ttt})^{-1}U_{\GL_n,\cO_\fp} A(\breve{\ttt})$, an element in $\begin{bsm} \bid_m&0 \\ M_{m+\epsilon,m}(\cO_\fp)& M_{m+\epsilon,n-m}(\cO_\fp)\end{bsm}$ becomes the above shape and the property $X^* J_{2m+\epsilon} X  \in \Her_n(\cO_\fp)$ stays invariant under these translations.) Given such an $X$, we can write it as
\begin{align*}
   X&=\left\{\begin{array}{ll}
   \begin{bmatrix}\bid_m&0\\ Y_3&Y_4\end{bmatrix}\begin{bmatrix}a_{\ttt}\\ &d\cdot \bid_{n-m}\end{bmatrix}^{-1},
    &\text{Case (OS0)(U0)},\\[3ex]
   \begin{bmatrix}\bid_m&0\\ Y_3&Y_4\\0&Y_6\end{bmatrix}\begin{bmatrix}a_{\ttt}\\ &d\cdot \bid_{n-m}\end{bmatrix}^{-1},
    &\text{Case (OS1)(U1)},
  \end{array}\right.
   &&\begin{aligned}
   Y_3&\in M_{m,m}(\cO_\fp), \\
   Y_4&\in M_{m,n-m}(\cO_\fp),\\
   Y_6&\in M_{1,n-m}(\cO_\fp),
   \end{aligned}
\end{align*}
with $a_{\ttt}=\diag(a_1,\cdots,a_m)$. Then
\begin{align}
    \label{eq:XJX0} &\ltrans{\bar{X}} \begin{bmatrix} \,&\bid_m\\ \bid_m\end{bmatrix} X
    = \begin{bmatrix}\bar{a}^{-1}_{\ttt}\\ &\bar{d}^{-1}\cdot \bid_{n-m}\end{bmatrix} \begin{bmatrix}Y_3+\ltrans{\bar{Y}}_3&Y_4\\ \ltrans{\bar{Y}}_4&0\end{bmatrix}\begin{bmatrix}a^{-1}_{\ttt}\\ &d^{-1}\cdot \bid_{n-m}\end{bmatrix},
    &&\textup{Case (OS0)(U0)},\\
    \label{eq:XJX1} &\ltrans{\bar{X}} \begin{bmatrix} \,&\bid_m\\ \bid_m\\&&1\end{bmatrix} X
    = \begin{bmatrix}\bar{a}^{-1}_{\ttt}\\ &\bar{d}^{-1}\cdot \bid_{n-m}\end{bmatrix} \begin{bmatrix}Y_3+\ltrans{\bar{Y}}_3&Y_4\\ \ltrans{\bar{Y}}_4&\ltrans{\bar{Y}}_6Y_6\end{bmatrix}\begin{bmatrix}a^{-1}_{\ttt}\\ &d^{-1}\cdot \bid_{n-m}\end{bmatrix}, 
    &&\textup{Case (OS1)(U1)}.
\end{align}

In Case (OS0)(U0), we can write $X$ as
\begin{align}
    \label{eq:X0} X&=\begin{bmatrix}\bid_m&0\\ \frac{Y_3-Y^*_3}{2}&\bid_m\end{bmatrix} 
    \cdot \ttt ^{-1} \begin{bmatrix}a_{\ttt}\\& \bar{a}^{-1}_{\ttt} \end{bmatrix} \begin{bmatrix} \bid_m&0\\ \frac{Y_3+Y^*_3}{2}&Y_4\end{bmatrix} \begin{bmatrix}a^{-1}_{\ttt}\\ &d^{-1}\cdot \bid_{n-m}\end{bmatrix}.
\end{align}
From $Y_3\in M_{m,m}(\cO_\fp)$ and $\eqref{eq:XJX0}\in \Her_n(\cO_\fp)$, we see that 
\begin{align*}
    \eqref{eq:X0}\in U^-_{H_\fp,\cO_\fp}\,\ttt^{-1} \begin{bmatrix} \bid_m& 0\\ M_{m,m}(\cO_\fp)& M_{m,n-m}(\cO_\fp)\end{bmatrix}.
\end{align*}
This proves, in Case (OS0)(U0), that every $X$ as in \eqref{eq:Xbelong} belongs to $\supp\,\phi_{H_\fp,\ttt}$. 

In Case (OS1)(U1), we can write $X$ as
\begin{align*}
    X&=\begin{bmatrix}\bid_m&0\\ \frac{Y_3-Y^*_3}{2}&\bid_m\\&&1\end{bmatrix} 
    \cdot \ttt ^{-1} \begin{bmatrix}a_{\ttt}\\& \bar{a}^{-1}_{\ttt} \\&&u\end{bmatrix} \begin{bmatrix} \bid_m&0\\ \frac{Y_3+Y^*_3}{2}&Y_4\\0&Y_6\end{bmatrix} \begin{bmatrix}a^{-1}_{\ttt}\\ &d^{-1}\cdot \bid_{n-m}\end{bmatrix}.
\end{align*}
From $Y_3\in M_{m,m}(\cO_\fp),Y_6\in M_{1,n-m}(\cO_\fp)$ and \eqref{eq:XJX0} plus the condition on $u,d$ for Case (U1) with $n>m$, $K_\fp=F_\fp\oplus F_\fp$, we see that 
\begin{align*}
    X\in U^-_{H_\fp,\cO_\fp}\,\ttt^{-1} \begin{bmatrix} \bid_m& 0\\ M_{m,m}(\cO_\fp)& M_{m,n-m}(\cO_\fp)\\0&M_{1,n-m}(\cO_\fp)\end{bmatrix}.
\end{align*}
This proves, in Case (OS1)(U1), that every $X$ as in \eqref{eq:Xbelong} belongs to $\supp\,\phi_{H_\fp,\ttt}$.  
\end{proof}

\section{Theta lifts of Hida families on definite orthogonal and unitary groups}

As an application of the properties proved for the test Schwartz functions at $\fp$ in the previous section. We use them to construct theta lifts of Hida families on definite orthogonal and unitary groups to symplectic and quasi-split unitary groups.

\subsection{Some setting}

Let $F$ be a totally real field and let $K=F$ or a quadratic CM extension of $F$. Fix an odd prime number $p$ unramified in $K$. 

Define the algebraic group $G'$ as the subgroup of ${\rm Res}_{K/\bQ}\GL_{n,K}$ such that for a $\bQ$-algebra $R$, 
\[
    G'(R)=\left\{g\in \GL_{2n}(R\otimes_\bQ K):\ltrans{\bar{g}}\begin{bmatrix}&\bid_n\\-\bid_n\end{bmatrix}=\nu(g)\begin{bmatrix}&\bid_n\\-\bid_n\end{bmatrix},\,\nu(g)\in R^\times\right\},
\]
and 
\[
    G=\left\{g\in G':\nu(g)=1\right\}.
\]

Let $W$ be a symmetric space over $K$ when $K=F$ (resp. Hermitian space over $K$ when $K$ is a CM extension of $F$) of dimension $2m+\epsilon$.  Fix a basis $e_1,\dots,e_{2m+\epsilon}$ of $W$ and put $\zeta_W=\left(\left<e_i,e_j\right>\right)_{1\leq i,j\leq 2m+\epsilon}$ (where we use the convention that $\left<aw_1,bw_2\right>=\bar{a}b\left<w_1,w_2\right>$, $a,b\in K$). Define the algebraic group $H$ over $\bQ$ as  the subgroup of ${\rm Res}_{K/\bQ}\GL_{n,K}$ such that for a $\bQ$-algebra $R$,
\[
   H(R)=\left\{g\in \GL_{2m+\epsilon}(R\otimes_{\bQ} K): \ltrans{\bar{g}}\zeta_W g=\zeta_W\right\}.
\]
We assume that $H$ is compact at all archimedean places and quasi-split at at all places above $p$. We fix a Haar measure on $H(\bA_{\bQ,f})$.

Fix an additive character $\psi=\bigotimes_v \psi_v:F\backslash\bA_F\ra\bC^\times$ with 
\[
    \psi_v(x)=\left\{\begin{array}{ll}e^{-2\pi i\{\Tr_{F_v/\bQ_{\ell(v)}}x\}}, &\text{$v$ finite},\\[1ex]
    e^{2\pi i x},& v\mid\infty,\end{array}\right.
\]
where $\ell(v)$ denotes the place of $\bQ$ under $v$ and $\{\Tr_{F_v/\bQ_{\ell(v)}}x\}$ denote the fractional part of $\Tr_{F_v/\bQ_{\ell(v)}}x$.

\subsection{$p$-adic families of automorphic forms on $H$ and Petersson inner product}\label{sec:Hpadic} 
Denote by $\Sigma_p$ the set of places of $F$ diving $p$. We fix an identification between $(W\otimes\bQ_p,\left<,\right>)$ between $\prod_{p\in\Sigma_\fp} (K\otimes_F F_\fp)^{2m+\epsilon}$ together with the symmetric/Hermitian form given by $J_{2m+\epsilon}$ (defined in \eqref{eq:J}) with respect to the standard basis of  $(K\otimes_F F_\fp)^{2m+\epsilon}$. Such an identification also gives
\begin{align*}
    H(\bQ_p)&\xrightarrow[\cong]{\iota_{H,p}}
    \prod_{\fp\in\Sigma_p}H_\fp=\left\{\begin{array}{ll}\prod_{\fp\in\Sigma_p} \OO_{2m+\epsilon,\fp}, & K=F,\\[1ex]
   \prod_{\fp\in\Sigma_p} \U_{2m+\epsilon,\fp}, & K\neq F.\end{array}\right.
\end{align*}

Let $K^{H,p}_f\subset H(\bA^p_{\bQ,f})$ be an open compact subgroup. For a positive integer $r$, let $K^H_{p,1}(p^r)\subset H(\bQ_p)$ (resp. $K^H_{p,1}(p^r)\subset H(\bQ_p)'$) be the open compact subgroup consisting of elements whose images under $\iota_{H,p}$ belong to
\begin{align*}
   &\Bigg\{(h_\fp)\in \prod_{\fp\in\Sigma_p} H_\fp\cap \GL_{2m+\epsilon}(\cO_\fp):h_\fp\equiv \begin{blockarray}{ccc}m&m+\epsilon\\ \begin{block}{[cc]c}A&B&m\\0&D&m+\epsilon\\ \end{block}\end{blockarray}\text{ with }A\in U_{\GL_m,\cO_\fp}\text{ for all $\fp\in\Sigma_p$}\Bigg\}\\
   &(\text{resp. }
   \Bigg\{(h_\fp)\in \prod_{\fp\in\Sigma_p} H_\fp\cap \GL_{2m+\epsilon}(\cO_\fp):h_\fp\equiv \begin{blockarray}{ccc}m&m+\epsilon\\ \begin{block}{[cc]c}A&B&m\\0&D&m+\epsilon\\ \end{block}\end{blockarray}\text{ with }D\in U^-_{\GL_m,\cO_\fp}\text{ for all $\fp\in\Sigma_p$}\Bigg\}). 
\end{align*}
For a $\bZ_p$-algebra $R$, we let
\begin{align*}
   T_{H,p}(R)&=\Big\{t\in H(\bQ_p\otimes_{\bZ_p} R):\iota_{H,p}(t)\text{ is diagonal with entries in $\prod_{\fp\in\Sigma_p} \left(\cO_\fp\otimes_{\bZ_p} R\right)^\times$}\Big\},
\end{align*}
and
\begin{align*}
    K^H_{p,0}(p^r)&=T_{H,p}(\bZ_p)K^H_{p,1}(p^r).
\end{align*}

\subsubsection{$p$-adic families on $H$}

For a ring $R$, define
\begin{equation}\label{eq:MHR}
\begin{aligned}
    M\big(K^{H,p}_fK^H_{p,1}(p^r),R\big)=\left\{\text{functions on $H(\bQ)\backslash H(\bA_{\bQ,f})/K^{H,p}_fK^H_{p,1}(p^r)$ with values in $R$}\right\},
\end{aligned}
\end{equation}
and define the following spaces of $p$-adic automorphic forms on $H$ over $\cO_L$ with tame level $K^{H,p}_f$ 
\begin{align}
   \label{eq:VHpadic} V^H\big(K^{H,p}_f,\cO_L\big)&=\varprojlim_l \varinjlim_r M\big(K^{H,p}_fK^H_{p,1}(p^r),\cO_L/p^l\cO_L\big),\\
\label{eq:VHpdiv}  \pzV^H\big(K^{H,p}_f,\cO_L\big)&=\varinjlim_l \varinjlim_r M\big(K^{H,p}_fK^H_{p,1}(p^r),\cO_L/p^l\cO_L\big).
\end{align}
The torus $T_{H,p}(\bZ_p)$ acts on the above spaces by right translation. 

Given $\ttt_p\in T^+_{H,p}(\bQ_p)=\iota^{-1}_{H,p}\left(\prod_{\fp\in\Sigma_p}T^+_{H_\fp}\right)$ (see \S\ref{sec:Up-oper} for the definition of $T^+_{H_\fp}$), we define the $\bU_p$-operator  $\tilde{U}^{H}_{p,\ttt_p}$ as a renormalization of the  operator defined in \eqref{eq:UHp}:
\begin{equation}\label{eq:tildeUp}
\begin{aligned}
   \tilde{U}^{H_\fp}_{\fp,\ttt_\fp}
   &=\vol\left(\ttt_\fp U_{H_\fp,\cO_\fp}\ttt^{-1}_\fp \right)^{-1}U^{H_\fp}_{\fp,\ttt_\fp}
   = \sum_{u\in U_{H_\fp,\cO_\fp}/\ttt_\fp U_{H_\fp,\cO_\fp}\ttt^{-1}_\fp} \text{action by $u\ttt_\fp$}\\
   \tilde{U}^H_{p,\ttt_p}
   &=\iota^{-1}_{H,p}\Big(\prod_{\fp\in\Sigma_p} \tilde{U}^{H_\fp}_{\fp,\ttt_\fp}\Big).
\end{aligned}
\end{equation}
Thanks to the normalization in \eqref{eq:tildeUp}, they naturally act on $M\big(K^{H,p}_fK^H_{p,1}(p^r),R\big)$ via right translation for all $\bZ_p$-algebra $R$. Fix $\ttt^\star_p=\iota^{-1}_{H,p}\left((\ttt^\star_\fp)_{\fp\in\Sigma_p}\right)$ with
\[
   \ttt^\star_\fp
   =\left\{\begin{array}{ll}
   \diag(p^m,p^{m-1},\cdots,p,p^{-m},p^{-m+1},\cdots,p^{-1}), &\epsilon=0,\\
   \diag(p^m,p^{m-1},\cdots,p,p^{-m},p^{-m+1},\cdots,p^{-1},1), &\epsilon=1,\,\text{with $\fp$ inert in $K$},\\
   \diag(p^{m+1},p^{m},\cdots,p^2,p^{-m-1},p^{-m},\cdots,p^{-2},(p,p^{-1})), &\text{$\epsilon=1$,  with $\fp$ split in $K$}.
   \end{array}\right.
\]
We define the ordinary projector on the spaces defined in \eqref{eq:VHpadic}\eqref{eq:VHpdiv} as
\[
    e^H_\ord=\lim_{s\to\infty} \left(\tilde{U}^H_{p,\ttt^\star_p}\right)^{s!}.
\]
(The convergence follows from that the limit  exists on the finite $\cO_L$-modules \eqref{eq:MHR} with $R=\cO_L/p^l\cO_L$.) By its definition, $e^H_\ord$ projects \eqref{eq:VHpadic}\eqref{eq:VHpdiv} into the subspaces where all $\bU_p$-operators act invertibly.

Take the Pontryagin dual of the ordinary part of \eqref{eq:VHpdiv}:
\begin{equation*}
   \pzV^H_\ord\big(K^{H,p}_f,\cO_L\big)^*=\Hom_{\bZ_p}\left(e^H_\ord\pzV^H\big(K^{H,p}_f,\cO_L\big),\bQ_p/\bZ_p\right),
\end{equation*}
on which the $\cO_L\llb T_{H,p}(\bZ_p)\rrb$-structure on \eqref{eq:VHpdiv} naturally induces a $\cO_L\llb T_{H,p}(\bZ_p)\rrb$-structure.

\begin{prop}
Let $T_{H,p}(\bZ_p)^\circ$ be the connected component of $T_{H,p}(\bZ_p)$ containing $\bid_{2m+\epsilon}$ and $\Lambda_H=\cO_L\llb T_{H,p}(\bZ_p)^\circ\rrb$. The spaces $\pzV^H_\ord\big(K^{H,p}_f,\cO_L\big)^*$ is a free $\Lambda_H$-module of rank equal to $\dim_{k_L}e^H_\ord M\big(K^{H,p}_fK^H_{p,1}(p),k_L\big)$ where $k_L$ denotes the residue field of $L$. 
\end{prop}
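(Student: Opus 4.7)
The plan is to apply the standard Hida vertical control strategy in this compact-at-infinity setting. Write $T := T_{H,p}(\bZ_p) = \mu_T \times T^\circ$ with $\mu_T$ finite of order prime to $p$, and for $r \geq 1$ set $T^{(r)} := T \cap K^H_{p,1}(p^r)$; a direct inspection of $K^H_{p,1}(p^r)$ shows $T^{(r)} \subset T^\circ$ for all $r \geq 1$ and $\Lambda_H = \varprojlim_r \cO_L[T^\circ/T^{(r)}]$. Right translation makes each
\[ V^l_r := e^H_\ord M\bigl(K^{H,p}_f K^H_{p,1}(p^r), \cO_L/p^l\cO_L\bigr) \]
into a $(\cO_L/p^l)[T/T^{(r)}]$-module commuting with $e^H_\ord$, and since each $V^l_r$ is finite and annihilated by $p^l$,
\[ \pzV^H_\ord(K^{H,p}_f, \cO_L)^* \;=\; \varprojlim_{l,r} \Hom_{\cO_L}\bigl(V^l_r, \cO_L/p^l\bigr), \]
a compact $\Lambda_H$-module.

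The technical heart will be a vertical control theorem at finite level: for every $r' \geq r \geq 1$ and every $l$, the natural inclusion $V^l_r \subset V^l_{r'}$ will be shown to induce an equality
\[ V^l_r \;=\; \bigl(V^l_{r'}\bigr)^{T^{(r)}/T^{(r')}}, \]
equivalently (by the self-duality of the finite group algebra $(\cO_L/p^l)[T^{(r)}/T^{(r')}]$), the $T^{(r)}/T^{(r')}$-coinvariants of $V^l_{r'}$ are canonically $V^l_r$. To prove this I will use the Iwahori-type decomposition of $K^H_{p,1}(p^r)$ relative to the Borel $B_{H_\fp}$ at each $\fp \in \Sigma_p$: the extra $T^{(r)}$-invariant data that could appear in $V^l_{r'}$ beyond $V^l_r$ is supported in the $U^-_{H_\fp}$-direction, and strict dominance of $\ttt^\star_p$ at each $\fp$ makes $\tilde{U}^H_{p,\ttt^\star_p}$ contract this direction strictly into arbitrarily deep subgroups; invertibility of $\tilde{U}^H_{p,\ttt^\star_p}$ on the ordinary subspace then forces that data to vanish after applying $e^H_\ord$.

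Granted the control theorem, a Nakayama / induction-on-$r$ argument will show that $\Hom_{\cO_L}(V^l_r, \cO_L/p^l)$ is a free $(\cO_L/p^l)[T^\circ/T^{(r)}]$-module of constant rank
\[ d := \dim_{k_L} e^H_\ord M\bigl(K^{H,p}_f K^H_{p,1}(p), k_L\bigr), \]
with the transition maps in $\{\Hom_{\cO_L}(V^l_r, \cO_L/p^l)\}_{l,r}$ given by reduction modulo the augmentation ideals (and modulo $p$). Taking the inverse limit first in $r$ and then in $l$ yields the freeness of $\pzV^H_\ord(K^{H,p}_f, \cO_L)^*$ over $\Lambda_H$ of rank $d$.

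The main obstacle is the vertical control theorem; the passage from finite to infinite level is formal. This control theorem is a variant of the classical result of Hida for $\GL_2$ adapted to the group $H$, the Borel $B_{H_\fp}$ at each $\fp \in \Sigma_p$, and the specific strictly-dominant element $\ttt^\star_p$ fixed in \S\ref{sec:Hpadic}.
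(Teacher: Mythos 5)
Your overall strategy coincides with the paper's: the paper proves this Proposition simply by invoking the argument of Geraghty's Proposition~2.20, which is exactly the vertical-control-plus-finite-level-freeness route you describe, and your identification of the control statement $V^l_r=(V^l_{r'})^{T^{(r)}/T^{(r')}}$ and of its mechanism (Iwahori factorization of $K^H_{p,1}(p^r)$ and strict contraction of the $U^-_{H_\fp}$-direction by the strictly dominant element $\ttt^\star_p$) is the right one.

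There is, however, one genuine gap: the assertion that, granted the control theorem, freeness of $\Hom_{\cO_L}(V^l_r,\cO_L/p^l)$ over $(\cO_L/p^l)[T^\circ/T^{(r)}]$ follows by ``Nakayama / induction on $r$,'' and that ``the passage from finite to infinite level is formal.'' Compatibility of coinvariants does not imply freeness: if $W'$ is $(\cO_L/p^l)[T^\circ/T^{(r-1)}]^{\oplus d}$ viewed as a module over $(\cO_L/p^l)[T^\circ/T^{(r)}]$ with $T^{(r-1)}/T^{(r)}$ acting trivially, its coinvariants are free of rank $d$ one level down, yet $W'$ is not free. Nakayama only produces a surjection from a free module of rank $d$; to see it is injective you must match lengths, and that count is precisely the missing input. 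In this compact-at-infinity setting it is supplied by the fact that $K^H_{p,0}(p^r)/K^H_{p,1}(p^r)\cong T_{H,p}(\bZ_p)/T^{(r)}$ acts \emph{freely} on the finite set $H(\bQ)\backslash H(\bA_{\bQ,f})/K^{H,p}_fK^H_{p,1}(p^r)$ (equivalently, the finite groups $H(\bQ)\cap hK^{H,p}_fK^H_{p,0}(p^r)h^{-1}$ have no $p$-torsion mapping nontrivially to the torus quotient), so that $M\big(K^{H,p}_fK^H_{p,1}(p^r),\cO_L/p^l\big)$ is free over $(\cO_L/p^l)[T^\circ/T^{(r)}]$ \emph{before} applying $e^H_\ord$, and $e^H_\ord M$ is then a direct summand, hence projective, hence free over this local ring; the control theorem is used only to show the rank is the constant $d$. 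This free-action statement is where a smallness (neatness) condition on $K^{H,p}_f$ enters --- Geraghty assumes the tame level is ``sufficiently small'' for exactly this purpose --- and it is the one ingredient your sketch omits.
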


\begin{proof}
Our setting is a bit more general than the one in \cite{Geraghty}, but the same argument in the proof of Proposition~2.20 in {\it loc.cit}  applies.
\end{proof}

Define the space of ($\Lambda_H$-adic) Hida families on $H$ of tame level $K^{H,p}_f$ as
\[
    \pzM^H_\ord(K^{H,p}_f,\cO_L)=\Hom_{\Lambda_H}\left(\pzV^H_\ord(K^{H,p}_f,\cO_L)^*,\Lambda_H\right).
\]

For our later construction, we define another $\cO_L\llb T_{H,p}(\bZ_p)\rrb$-module. Denote by
\[
   \Meas\left(T_{H,p}(\bZ_p),V^H\big(K^{H,p}_f,\cO_L\big)\right)
\] 
the space of $p$-adic measures on $T_{H,p}(\bZ_p)$ valued in $V^H\big(K^{H,p}_f,\cO_L\big)$ ({\it i.e.} continuous $\cO_L$-linear maps from $\sC\left(T_{H,p}(\bZ_p),\cO_L\right)$, the space of continuous $\cO_L$-valued functions on $T_{H,p}(\bZ_p)$ equipped with the topology of uniform convergence, to $V^H\big(K^{H,p}_f,\cO_L\big)$ equipped with $p$-adic topology). Inside it, we consider the subspace 
\begin{equation}\label{eq:MeasH}
\begin{aligned}
    &\Meas\left(T_{H,p}(\bZ_p),V^H\big(K^{H,p}_f,\cO_L\big)\right)^\natural\\
    =&\,\left\{\mu\in \Meas\left(T_{H,p}(\bZ_p),V^H\big(K^{H,p}_f,\cO_L\big)\right):
   \,\begin{aligned}&\mu(t\cdot \phi)=t\cdot\mu(\phi)\text{ for all  $t\in T_{H,p}(\bZ_p)$}\\ 
   &\phi\in \sC\left(T_{H,p}(\bZ_p),\cO_L\right)\end{aligned} \right\}
\end{aligned}
\end{equation}
where the action of $T_{H,p}(\bZ_p)$ on  $\sC\left(T_{H,p}(\bZ_p),\cO_L\right)$ is via translation. This space comes with a natural $\cO_L\llb T_{H,p}(\bZ_p)\rrb$-structure compatible with the $H(\bA_{\bQ,f})$-action given by
\begin{align*}
   (h\cdot\mu)(\phi)&=h\cdot \mu(\phi),
   &h\in H(\bA_{\bQ,f}).
\end{align*}

\subsubsection{A $p$-adic Petersson inner product}\label{sec:Pet}

Let $\vartheta_{H,p}=\iota^{-1}_{H,p}\left((\vartheta_{H_\fp})_{\fp\in\Sigma_p}\right)$ with 
\[
    \vartheta_{H_\fp}=\left\{\begin{array}{ll}
    \begin{bmatrix}\phantom{0}&\bid_m\\ \bid_m\end{bmatrix}, &\epsilon=0,\\[2ex]
    \begin{bmatrix}\phantom{0}&\bid_m\\ \bid_m\\&& 1\end{bmatrix}, &\epsilon=1.
    \end{array}\right.
\]
We denote by $\Hom_{\Lambda_H}\left(\pzV^H(K^{H,p}_f,\cO_L)^*,\Lambda_H\right)'$ the $\cO_L\llb T_{H,p}(\bZ_p)\rrb$-module which, as a set with an action of $H(\bA_{\bQ,f})$, is the same as $\Hom_{\Lambda_H}\left(\pzV^H(K^{H,p}_f,\cO_L)^*,\Lambda_H\right)$ and with the $\cO_L\llb T_{H,p}(\bZ_p)\rrb$-module structure given by $t\in T_{H,p}(\bZ_p)$ acting through $\bar{t}$. 
In this subsection, we define a pairing between the spaces $\pzM^H_\ord(K^{H,p}_f,\cO_L)'=e^H_\ord\Hom_{\Lambda_H}\left(\pzV^H(K^{H,p}_f,\cO_L)^*,\Lambda_H\right)'$ and $\Meas\left(T_{H,p}(\bZ_p),V^H\big(K^{H,p}_f,\cO_L\big)\right)^\natural$.

For $h\in H(\bA_{\bQ,f})$, we have
\begin{align*}
    &{\rm ev}_h\in \pzV^H(K^{H,p}_f,\cO_L)^*,
    &&{\rm ev}_h:V^H\big(K^{H,p}_f,\cO_L\big)\lra \cO_L
\end{align*}
given by evaluation at $h$. For 
\begin{align*}
   \mu&\in \Meas\left(T_{H,p}(\bZ_p),V^H\big(K^{H,p}_f,\cO_L\big)\right)^\natural,
   &\eta&\in\Hom_{\Lambda_H}\left(\pzV^H(K^{H,p}_f,\cO_L)^*,\Lambda_H\right)',
\end{align*}
we let $\mu_h\in \Meas\left(T_{H,p}(\bZ_p),\cO_L\right)$ be the image of $\mu$ under the map $\Meas\left(T_{H,p}(\bZ_p),V^H\big(K^{H,p}_f,\cO_L\big)\right)\ra \Meas\left(T_{H,p}(\bZ_p),\cO_L\right)$ induced by ${\rm ev}_h$, and $\eta_h=\eta({\rm ev}_h)\in\Lambda_H$. Then for all $t\in \cO_L\llb T_{H,p}(\bZ_p)\rrb$ 
\begin{align}
  \label{eq:Tinv}  \mu_{ht}\otimes\eta_{ht\vartheta_{H,p}\ttt_p}
    &= t\cdot \mu_h\otimes \eta_{ht\vartheta_{H,p}\ttt_p}
    = \mu_h\otimes \bar{t}\cdot   \eta_{ht\vartheta_{H,p}\ttt_p}\\
   \nonumber &= \mu_h\otimes \bar{t}\cdot   \eta_{h\vartheta_{H,p}\bar{t}^{-1}\ttt_p}= \mu_h\otimes   \eta_{h\vartheta_{H,p}\bar{t}^{-1}\ttt_p\bar{t}}
    =\mu_h\otimes\eta_{h\vartheta_{H,p}\ttt_p},
\end{align}
as  elements in $ \Meas\left(T_{H,p}(\bZ_p),\cO_L\right)\otimes_{\cO_L\llb T_{H,p}(\bZ_p)\rrb} \Lambda_H\cong {\cO_L\llb T_{H,p}(\bZ_p)\rrb}$. 

\vspace{.5em}

Given $r\in\bZ_{\geq 1}$ and  $\ttt_p=\iota^{-1}_{H,p}\left((\ttt_\fp)_{\fp\in\Sigma_p}\right)\in T^+_{H,p}$ sufficiently positive for $r$ such that 
\[
    K^H_{p,0}(p)\,\cap\, (\vartheta_{H,p}\ttt_p) K^H_{p,0}(p)(\vartheta_{H,p}\ttt_p)^{-1}=K^H_{p,0}(p)\,\cap\, (\vartheta_{H,p}\ttt_p) K^H_{p,0}(p^r)(\vartheta_{H,p}\ttt_p)^{-1}\subset K^H_{p,0}(p^r),
\]
Thanks to \eqref{eq:Tinv}, we can define
\begin{align*}
    \sB_{K^{H,p}_f,\,\ttt_p}: \Meas\left(T_{H,p}(\bZ_p),\cO_L\right)\otimes_{\cO_L\llb T_{H,p}(\bZ_p)\rrb} \Hom_{\Lambda_H}\left(\pzV^H(K^{H,p}_f,\cO_L)^*,\Lambda_H\right)'
    \lra  \cO_L\llb T_{H,p}(\bZ_p)\rrb
\end{align*}
as
\[
    \sB_{K^{H,p}_f,\,\ttt_p}(\mu\otimes\eta)
    = \sum_{h\in H(\bQ) \backslash H(\bA_{\bQ,f})/K^{H,p}_f \left(K^H_{p,0}(p)\,\cap\, (\vartheta_{H,p}\ttt_p) K^H_{p,0}(p)(\vartheta_{H,p}\ttt_p)^{-1}\right)}\mu_h\otimes\eta_{h\vartheta_{H,p}  \ttt_p}.
\]
It is straightforward to check that for $\ttt_p,\ttt'_p\in T^+_{H,p}$ both sufficiently positive for $r$, we have 
\[
    \sB_{K^{H,p}_f,\,\ttt_p\ttt'_p}(\mu\otimes\eta)
    =\sB_{K^{H,p}_f,\,\ttt_p}(\mu\otimes\tilde{U}_{p,\ttt'_p}\eta).
\]  
Therefore, if $\eta$ further belongs to $\pzM^H_\ord(K^{H,p}_f,\cO_L)'=e^H_\ord\Hom_{\Lambda_H}\left(\pzV^H(K^{H,p}_f,\cO_L)^*,\Lambda_H\right)'$, we can define 
\begin{align}
     &\sB_{K^{H,p}_f}(\mu\otimes\eta)
    =\sB_{K^{H,p}_f,\,(\ttt^\star_\fp)^s}(\mu\otimes(\tilde{U}_{p,\ttt^\star_p})^{-s}\eta).
    && s\gg 0.
\end{align}
This gives us an $\cO_L\llb T_{H,p}(\bZ_p)\rrb$-linear functional
\begin{equation}\label{eq:sB} 
    \sB_{K^{H,p}_f}:\Meas\left(T_{H,p}(\bZ_p),\cO_L\right)\otimes_{\cO_L\llb T_{H,p}(\bZ_p)\rrb} \pzM^H_\ord(K^{H,p}_f,\cO_L)'\lra \cO_L\llb T_{H,p}(\bZ_p)\rrb,
\end{equation}
whose specialization to classical automorphic forms is the Petersson inner product with a modification at places above $p$.

\subsection{Constructing $p$-adic families of $q$-expansions on $G$}

\subsubsection{Interpolating Fourier coefficients of theta series}

Let $\Her_n(\bA_K)=\{\sigma\in M_{n,n}(\bA_K):\sigma=\ltrans{\bar{\sigma}}\}$, the set of symmetric $n\times n$ Hermitain (resp. symmetric) matrices when $K\neq F$ (resp. $K=F$). We fix the Haar measure on it with 
\begin{align*}
  & \vol(\Her_n(\cO_{K,v}))=1, \quad \quad\text{$v$ finite place of $F$},\\
  &2^{n(n-1)[F:\bQ]/2}|D_F|^{-n/2}|D_K|^{-n(n-1)/4}\cdot \prod_{v\mid \infty}\left(\prod_{j=1}^n dx_{v,jj}\prod_{1\leq i< j\leq n} 2^{-1}dz_{v,ij}d\bar{z}_{v,ij}\right), \quad \quad\text{at $\infty$}.
\end{align*}

Let $\phi=\bigotimes_v \phi_v$ be a Schwartz function on $W\otimes_K\bA_K$ with $\phi_v=\mathds{1}_{W^n_0\otimes_{\cO_F} \cO_{F,v}}$ for almost all finite places $v$ of $F$, where $W_0=\cO_K e_1+\cO_K e_2+\cdots +\cO_K e_{2m+\epsilon}$ (with $e_1,e_2,\dots,e_{2m+\epsilon}$ our fixed basis of $W$). Consider the theta series on $G(K)\times H(K)\backslash G(\bA_K)\times H(\bA_K)$ associated to $\phi$:
\begin{equation}\label{eq:thetaphi}
    \theta(\phi)(g,h)=\sum_{X\in W} \omega_\psi(g,h)\theta(\phi),
\end{equation}
where $\omega_\psi$ denotes the Weil representation (see \S\ref{sec:WeilRep} for some formulas). Take $\beta\in\Her_n(K)$, the $\beta$-th Fourier coefficient of $\theta(\phi)(\cdot,h)$ at $g\in G(\bA_K)$ is computed by
\begin{equation}\label{eq:FC-intg}
      \theta(\phi)_\beta(g,h)=\int_{\Her_n(K)\backslash \Her_n(\bA_K)}\theta(\phi)\left(\begin{bmatrix}\bid_n&y\\ 0&\bid_n\end{bmatrix}g,h\right) \cdot \psi(-\Tr\beta y)\,dy.
\end{equation}
Plugging \eqref{eq:thetaphi} into \eqref{eq:FC-intg},
\begin{equation}\label{eq:theta-beta}
\begin{aligned}
     \theta(\phi)_\beta(g,h)
     &=\int_{\Her_n(K)\backslash \Her_n(\bA_K)} 
   \sum_{X\in W^n}  \left( \omega_\psi \left(\begin{bmatrix}\bid_n&y\\ 0&\bid_n\end{bmatrix},\bid_H\right)
     \left(\omega_\psi\left(g,h\right)\phi\right)\right)(X)   \cdot \psi(-\Tr\beta y)\,dy \\
     &=\int_{\Her_n(K)\backslash \Her_n(\bA_K)} 
   \sum_{X\in W^n}  
     \left(\omega_\psi\left(g,h\right)\phi\right)(X)   \cdot \psi(\Tr \left<X,X\right>y-\Tr\beta y)\,dy \\
     &=\sum_{X\in W^n,\,\left<X,X\right>_W=\beta}  \left(\omega_\psi\left(g,h\right)\phi\right)(X) \\
     &=\sum_{X\in W^n,\,\left<X,X\right>_W=\beta}  \left(\omega_\psi\left(g\right)\phi\right)(\ltrans{h}X).
\end{aligned}
\end{equation}

For an archimedean place $\sigma:F\ra\bR$, fix an extension $\sigma:K\ra\bR$,  put
\begin{align*}
    g_{z_\sigma}&=\begin{bmatrix}\left(\frac{z_\sigma-\ltrans{\bar{z}_\sigma}}{2i}\right)^{1/2}& \frac{z_\sigma+\ltrans{\bar{z}_\sigma}}{2}\left(\frac{z_\sigma-\ltrans{\bar{z}_\sigma}}{2i}\right)^{-1/2}\\0&\left(\frac{z_\sigma-\ltrans{\bar{z}_\sigma}}{2i}\right)^{-1/2} \end{bmatrix},
     &z_\sigma=x_\sigma+iy_\sigma,\,x_\sigma,y_\sigma\in M_{n,n}(\bR),\,i(\ltrans{\bar{z}_\sigma}-z_\sigma)>0,
\end{align*} 
and define the Schwartz function $\phi^\sG_\sigma$ on $W^n\otimes_{F,\sigma}\bR$ as the Gaussian function:
\begin{align}
   \label{eq:Gaussian} \phi^{\sG}_\sigma(X\otimes a)
    &=e^{-\pi\left<a\sigma(X),a\sigma(X)\right>_W},
   &&X\in W^n.
\end{align}

\begin{prop}\label{prop:mu-beta}
Fix a Schwartz function $\phi^p_f$ on $W^n\otimes_F\bA^p_{\bF,f}$ taking values inside $\cO_L\cap \ol{\bQ}$ and fixed by tame level group $K^{H,p}_f$. For each element $g^p_f= \begin{bmatrix}A^p_f&B^p_f\\ 0&\ltrans{\bar{A}}^{p,-1}_f\end{bmatrix} \in G(\bA^p_{\bQ,f})$, there exists 
\[
    \mu_{g^p_f,\beta}\in \Meas\left(T_{H,p}(\bZ_p),V^H(K^{H,p}_f,\cO_L)\right)^\natural
\]
such that for $h\in H(\bA_{\bQ,f})$ and finite-order character $\uchi:T_{H,p}(\bZ_p)\ra\cO^\times_L$,
\begin{equation}\label{eq:mu-beta}
\begin{aligned}
    \mu_{g^p_f,\beta}(\uchi)=&\,\theta\left(\phi^p_f\otimes \prod\nolimits_{\fp\mid p}\phi_{\uchi_\fp}\otimes\prod\nolimits_{\sigma:F\ra\bR} \phi^\sG_\sigma\right)_\beta(g^p_f \prod_{\sigma:F\ra \bR}g_{z_\sigma},-) \\
    &\times \det\left(\frac{z_\sigma-\ltrans{\bar{z}}_\sigma}{2i}\right)^{-\frac{2m+\epsilon}{2}} e^{-\pi i  \Tr z_\sigma\sigma(\beta)},
\end{aligned}
\end{equation}
where $\phi_{\uchi_\fp}$ is defined in \S\ref{sec:Schwartz} and $\phi^\sG_\sigma$ is defined in \eqref{eq:Gaussian}. 
\end{prop}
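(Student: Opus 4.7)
The strategy is to compute the right-hand side of \eqref{eq:mu-beta} explicitly via the Fourier coefficient formula \eqref{eq:theta-beta}, show it reduces to a finite sum of Dirac measures on $T_{H,p}(\bZ_p)$, and then verify the measure-theoretic and equivariance properties.

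First I would factor the Weil action along local places. Since $g = g^p_f\prod_\sigma g_{z_\sigma}$ has trivial $p$-component and $h\in H(\bA_{\bQ,f})$ has trivial archimedean component,
\[
   (\omega_\psi(g,h)\phi)(X) \,=\, (\omega_\psi(g^p_f)\phi^p_f)(\ltrans{h}^p X)\cdot\prod_{\fp\mid p}\phi_{\uchi_\fp}(\ltrans{h}_\fp X)\cdot\prod_\sigma (\omega_\psi(g_{z_\sigma})\phi^\sG_\sigma)(X).
\]
By a direct application of the formulas of \S\ref{sec:WeilRep} to the Siegel element $g_{z_\sigma}$ acting on the Gaussian, $(\omega_\psi(g_{z_\sigma})\phi^\sG_\sigma)(X)$ factors as a determinant in $(z_\sigma-\ltrans{\bar z_\sigma})/(2i)$ times $e^{\pi i\,\Tr\,\sigma(\langle X,X\rangle_W)z_\sigma}$. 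Multiplying by the archimedean correction factor in \eqref{eq:mu-beta} and restricting to summands with $\langle X,X\rangle_W=\beta$ makes both factors trivial, reducing \eqref{eq:mu-beta} to
\[
   \mu_{g^p_f,\beta}(\uchi)(h) \,=\, \sum_{\substack{X\in W^n\\\langle X,X\rangle_W=\beta}} (\omega_\psi(g^p_f)\phi^p_f)(\ltrans{h}^p X)\cdot\prod_{\fp\mid p}\phi_{\uchi_\fp}(\ltrans{h}_\fp X).
\]
Since $W$ is positive definite at every archimedean place, the locus $\{X\in W^n:\langle X,X\rangle_W=\beta\}$ is finite, so the sum is finite.

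Next I would construct $\mu_{g^p_f,\beta}$ as a finite sum of Dirac measures. Writing $Y^{(X,h)}_\fp = \ltrans{h}_\fp X$ in the block form of \S\ref{sec:Schwartz}, the definition of $\phi_{\uchi_\fp}$ shows that this factor vanishes unless $Y^{(X,h)}_\fp$ lies in the support cell $V_{\supp}$ appearing in the proof of Theorem~\ref{thm:local}---a condition independent of $\uchi$---and when nonzero it equals $\prod_j\chi_{\fp,j}(D_j(Y^{(X,h)}_{\fp,1}))$, with each minor in $\cO^\times_\fp$ by the Iwasawa condition. Packaging the minors over all $\fp\mid p$ into an element $\mathbf{a}(X,h)\in T_{H,p}(\bZ_p)$ chosen so that $\uchi(\mathbf{a}(X,h)) = \prod_{\fp,j}\chi_{\fp,j}(D_j(Y^{(X,h)}_{\fp,1}))$, I set
\[
   \mu_{g^p_f,\beta}(h) \,=\, \sum_X (\omega_\psi(g^p_f)\phi^p_f)(\ltrans{h}^p X)\cdot\mathds{1}_{\supp}(X,h)\cdot\delta_{\mathbf{a}(X,h)},
\]
whose pairing with any character $\uchi$ recovers the reduced expression above.

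Finally, continuity of $h\mapsto\mu_{g^p_f,\beta}(h)$, its locally constant behavior at level $K^{H,p}_f K^H_{p,1}(p^r)$ for $r$ large enough to resolve the Dirac locations, the $\cO_L$-integrality of values (from the hypothesis on $\phi^p_f$ and the absence of $p$-denominators in the prime-to-$p$ Weil action), and the left-$H(\bQ)$- and right-$K^{H,p}_f$-invariance together show $\mu_{g^p_f,\beta}\in \Meas(T_{H,p}(\bZ_p),V^H(K^{H,p}_f,\cO_L))$. The main obstacle is verifying the $T_{H,p}(\bZ_p)$-equivariance \eqref{eq:MeasH}: under right translation $h\mapsto ht$ by a diagonal $t=\diag(t_1,\ldots,t_m,\ldots)$, the minors scale cumulatively as $D_j(\ltrans{(ht)}_\fp X) = t_1\cdots t_j\cdot D_j(\ltrans{h}_\fp X)$, whereas the natural torus action on $\sC(T_{H,p}(\bZ_p),\cO_L)$ translates by $t$ itself. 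The resolution is to reparametrize $\mathbf{a}(X,h)$ using the ratios $\tilde D_j = D_j/D_{j-1}$ (with $D_0=1$), which scale multiplicatively by the single coordinate $t_j$, together with repackaged characters $\tilde\chi_{\fp,j}=\chi_{\fp,j}\chi_{\fp,j+1}\cdots\chi_{\fp,m}$ so that $\prod_j\chi_{\fp,j}(D_j) = \prod_j\tilde\chi_{\fp,j}(\tilde D_j)$; in these coordinates one has $\mathbf{a}(X,ht) = \mathbf{a}(X,h)\cdot t$, yielding $\mu(t\cdot\phi)(h) = \mu(\phi)(ht) = (t\cdot\mu(\phi))(h)$ as required.
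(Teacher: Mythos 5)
Your proposal is correct and follows essentially the same route as the paper's proof: factor the Weil action place by place, cancel the archimedean Gaussian contribution against the correction factor in \eqref{eq:mu-beta}, and realize the resulting sum as a finite combination of Dirac measures on $T_{H,p}(\bZ_p)$ located at the successive minors, with the $\natural$-equivariance handled by the ratio/character-repackaging reparametrization (a point the paper treats only implicitly via the identification $T_{H,p}(\bZ_p)\cong\prod_\fp(\cO_\fp^\times)^m$). The only small imprecision is the claim that $\{X\in W^n:\langle X,X\rangle_W=\beta\}$ is finite by positive definiteness alone—this set can be infinite, and finiteness of the sum also uses the compact support of the Schwartz function at the finite places to confine $X$ to a lattice.
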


\begin{proof}
By the definition of $\phi_{\uchi_\fp}$ in \S\ref{sec:Schwartz} and  \eqref{eq:theta-beta}, for each $h_f\in H(\bA_{\bQ,f})$, we have
\begin{align*}
   \text{RHS of \eqref{eq:mu-beta}}(h_f)
   = &\,\sum_{X\in W^n,\left<X,X\right>_W=\beta}\phi^p_f(\ltrans{h}^p_f X)\prod_{\fp\mid p}\phi_{\uchi_\fp}(\ltrans{h}_\fp X)\\
   &\times \prod_{\sigma:F\ra\bR} \omega_\psi(g_{z_\sigma})\phi^\sG_\sigma(X) \det\left(\frac{z_\sigma-\ltrans{\bar{z}}_\sigma}{2i}\right)^{-\frac{2m+\epsilon}{2}} e^{-\pi i  \Tr z_\sigma\sigma(\beta)}.
\end{align*}
By the formula for $\omega_\psi(g_{z_\sigma})$ and the definition of $\phi^\sG_\sigma$ \eqref{eq:Gaussian},for $X\in W^n$ with $\left<X,X\right>_W=\beta$, we have
\begin{align*}
    \omega_\psi(g_{z_\sigma})\phi^\sG_\sigma(X)
    &= \det\left(\frac{z_\sigma-\ltrans{\bar{z}}_\sigma}{2i}\right)^{\frac{2m+\epsilon}{2}}
     e^{2\pi i \cdot \frac{1}{2}\Tr\left(\frac{z_\sigma+\ltrans{\bar{z}}_\sigma}{2}\left<\sigma(X),\sigma(X)\right>_W\right)}
     e^{-\pi\Tr\left<\left(\frac{z_\sigma-\ltrans{\bar{z}_\sigma}}{2i}\right)^{1/2}\sigma(X),\left(\frac{z_\sigma-\ltrans{\bar{z}_\sigma}}{2i}\right)^{1/2}\sigma(X)\right>_W}\\
     &=\det\left(\frac{z_\sigma-\ltrans{\bar{z}}_\sigma}{2i}\right)^{\frac{2m+\epsilon}{2}}
     e^{\pi i \Tr\left(\frac{z_\sigma+\ltrans{\bar{z}}_\sigma}{2} \sigma(\beta)\right)}
     e^{-\pi\Tr \left(\frac{z_\sigma-\ltrans{\bar{z}}_\sigma}{2i} \sigma(\beta)\right)},\\
     &= \det\left(\frac{z_\sigma-\ltrans{\bar{z}}_\sigma}{2i}\right)^{\frac{2m+\epsilon}{2}} e^{\pi i  \Tr z_\sigma\sigma(\beta)}
\end{align*}

Plugging it into the above equation gives
\begin{align*}
   \textup{RHS of \eqref{eq:mu-beta}}(h)
   = &\,\sum_{X\in W^n,\left<X,X\right>_W=\beta}\phi^p_f(\ltrans{h}^p_fX)\prod_{\fp\mid p}\phi_{\uchi_\fp}(\ltrans{h}_\fp X).
\end{align*}
By our choice, $\phi^p_f(\ltrans{h}^p_f X)$ does not depend on $\uchi$, and $\prod_{\fp\mid p}\phi_{\uchi_\fp}(\ltrans{h}_\fp X)$ can be interpolated by a $p$-adic measure. Thus, for each $h_f\in H(\bA_{\bQ,f})$, RHS of $\eqref{eq:mu-beta}(h_f)\in \Meas(T_{H,p}(\bZ_p),\cO_L)$, and one can see that these measures vary continuouly with respect to $h_f$, so there exists $ \mu_{g^p_f,\beta}\in \Meas\left(T_{H,p}(\bZ_p),V^H(K^{H,p}_f,\cO_L)\right)$ with the interpolation properties given in \eqref{eq:mu-beta}. By the definition of $\phi_{\uchi_\fp}$, we have $\phi_{\uchi_\fp}(\ltrans{(h_\fp t_\fp)}X)=\uchi_\fp(t_\fp)\phi_{\uchi_\fp t_\fp}(\ltrans{h}_\fp)$ for $t_\fp\in T_{H,\fp}(\bZ_p)$, which implies that the measure $\mu_{g^p_f,\beta}$ belongs to $\Meas\left(T_{H,p}(\bZ_p),V^H(K^{H,p}_f,\cO_L)\right)^\natural$.
\end{proof}

Before we continue, we lay out some more setting for our construction of theta lifts of Haida families on $H$. 

\subsubsection{Some more setting}\label{sec:globalsetting}
From now on, we fix an open compact subgroup  $K^{H,p}_f\subset H(\bA^p_{\bQ,f})$, and an element
\begin{equation}\label{eq:bvarphi}
   \bvarphi\in \pzM^H_\ord(K^{H,p}_f,\cO_L)\otimes_{\cO_L\llb T_{H,p}(\bZ_p)\rrb} \bI_{\bvarphi},
\end{equation}
where $\bI_{\bvarphi}$ is the integral closure of $\Lambda_H$ inside a finite field extension of one component of the total ring of fractions of  $\cO_L\llb T_{H,p}(\bZ_p)\rrb$. (Usually one works with a $\bvarphi$ that is an eigenvector for Hecke algebras away from places where $K^H_v\neq H(F_v)\cap \GL_{2m+\epsilon}(\cO_{K,v})$, but this is not essentially needed for carrying out the following construction.)

For a point $\lambda:\bI_{\bvarphi}\ra\ol{\bQ}_p$, we call the character
\[
   T_{H,p}(\bZ_p)\ra \cO_L\llb T_{H,p}(\bZ_p)\rrb^\times\ra \bI^\times_{\bvarphi}\stackrel{\lambda}{\ra} \bar{\bQ}^\times_p
\]
its projection to the weight space.   

\subsubsection{Families of $q$-expansions of theta lifts}

Pairing the measure in Proposition~\ref{prop:mu-beta} with a Hida family on $H$ via the $p$-adic Petersson inner product constructed in \S\ref{sec:Pet} gives rise to a family of theta lifts from $H$ to $G$ in terms of $q$-expansions.

\begin{prop}\label{prop:qexp}
Fix a Schwartz function $\phi^p_f$ on $W^n\otimes_F\bA^p_{\bF,f}$ taking values inside $\cO_L\cap \ol{\bQ}$ and fixed by tame level group $K^{H,p}_f$. For a given $\bI_{\bvarphi}$-adic Hida family $\bvarphi$ as in \eqref{eq:bvarphi}, there exists an $\bI_{\bvarphi}$-adic family of $q$-expansions 
\[
    {\bm \theta}(\bvarphi)_{\qexp, g^p_f}\in\bI_{\bvarphi} \llb \Her_n(K)_{\geq 0}\rrb 
\]
such that for all points $\lambda:\bI_{\bvarphi}\ra\ol{\bQ}_p$ with the composition $T_{H,p}(\bZ_p)\ra \cO_L\llb T_{H,p}(\bZ_p)\rrb^\times\ra \bI^\times_{\bvarphi}\stackrel{\lambda}{\ra} \bar{\bQ}^\times_p$ a finite-order character $\uchi$, 
\begin{align*}
   \lambda\left({\bm \theta}(\bvarphi)_{\qexp, g^p_f}\right)
   =\vol(K^{H,p}_f)^{-1}\cdot \textup{$q$-expansion of } \theta^\dagger\left(\phi^p_f\otimes \prod\nolimits_{\fp\mid p}\phi_{\uchi_\fp}\otimes\prod\nolimits_{\sigma:F\ra\bR} \phi^\sG_\sigma,\lambda(\bvarphi)\right) \text{ at $g^p_f$}. 
\end{align*}
Here for a Schwartz function $\phi$ on $W^n\otimes_F\bA_F$ and an automorphic form $\varphi$ on $H$ which is an eigenvector for the $\bU_p$-operators with eigenvalue $\alpha(\varphi,\ttt^\star_p)$ for $\tilde{U}^H_{p,\ttt^\star_p}$, the theta lift $\theta^\dagger(\phi,\varphi)$ with modification at $p$ is defined by 
\begin{align}
    \label{eq:thetadagger} \theta^\dagger(\phi,\varphi)(g)&=\alpha(\varphi,\ttt^\star_p)^{-s}
     \int_{[H]} \theta \left(\phi)(g,h)\,\varphi(h\vartheta_{H,p}(\ttt^\star_p)^s\right)\,dh,
     &&s\gg 0.
\end{align}
\end{prop}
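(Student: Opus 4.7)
The plan is to define the $q$-expansion one Fourier coefficient at a time via the $p$-adic Petersson pairing $\sB_{K^{H,p}_f}$ from \eqref{eq:sB}. For each $\beta\in\Her_n(K)_{\geq 0}$, the measure $\mu_{g^p_f,\beta}\in\Meas\bigl(T_{H,p}(\bZ_p),V^H(K^{H,p}_f,\cO_L)\bigr)^\natural$ constructed in Proposition~\ref{prop:mu-beta} pairs with $\bvarphi\in\pzM^H_\ord(K^{H,p}_f,\cO_L)\otimes_{\Lambda_H}\bI_{\bvarphi}$ (viewed in the primed module via $t\mapsto\bar t$ as in \S\ref{sec:Pet}) to produce an element
\[
   \sB_{K^{H,p}_f}\bigl(\mu_{g^p_f,\beta}\otimes\bvarphi\bigr)\in \bI_{\bvarphi},
\]
after extending $\sB_{K^{H,p}_f}$ $\bI_{\bvarphi}$-linearly in the second factor. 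The formal sum of these elements over $\beta$, with the normalization chosen to match \eqref{eq:theta-dagger}, is what I would take as ${\bm\theta}(\bvarphi)_{\qexp,g^p_f}\in \bI_{\bvarphi}\llb \Her_n(K)_{\geq 0}\rrb$.

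To check the interpolation identity at a point $\lambda$ projecting to a finite-order $\uchi$, set $\varphi=\lambda(\bvarphi)$ and $\phi=\phi^p_f\otimes\prod_\fp\phi_{\uchi_\fp}\otimes\prod_\sigma\phi^\sG_\sigma$. I would first compute the $\beta$-th Fourier coefficient of $\theta^\dagger(\phi,\varphi)$ at $g^p_f\prod_\sigma g_{z_\sigma}$ directly from \eqref{eq:thetadagger}: because $H$ is compact at infinity, the Haar integral on $H(\bQ)\backslash H(\bA_\bQ)$ collapses to a finite sum over a double coset space with weights $\vol(K^{H,p}_f K^H_{p,0}(p^r))$ for $r\gg 0$, and the Fourier coefficient operation commutes with this sum via \eqref{eq:FC-intg}--\eqref{eq:theta-beta}. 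On the other side, unwinding $\sB_{K^{H,p}_f}$ via its stabilized formula yields a matching double-coset sum. Proposition~\ref{prop:mu-beta} identifies each summand $\mu_{g^p_f,\beta}(\uchi)(h)$ with $\theta(\phi)_\beta(g^p_f\prod_\sigma g_{z_\sigma},h)$ up to the archimedean factor $\prod_\sigma\det\bigl((z_\sigma-\ltrans{\bar z}_\sigma)/2i\bigr)^{-(2m+\epsilon)/2}e^{-\pi i\Tr z_\sigma\sigma(\beta)}$, which is precisely the factor stripped off when passing to the $q$-expansion of a holomorphic form on $G'$.

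The main obstacle is bookkeeping at $p$. I would need to pick $s$ large enough that the truncated formula $\sB_{K^{H,p}_f,(\ttt^\star_p)^s}(\mu\otimes(\tilde U^H_{p,\ttt^\star_p})^{-s}\bvarphi)$ used to define $\sB_{K^{H,p}_f}(\mu\otimes\bvarphi)$ is stable, and simultaneously that $\varphi(h\vartheta_{H,p}(\ttt^\star_p)^s)$ depends on $h$ only through cosets fine enough to support the collapsed Haar integral. Upon specialization at $\lambda$, the renormalization $(\tilde U^H_{p,\ttt^\star_p})^{-s}$ produces exactly the eigenvalue factor $\alpha(\varphi,\ttt^\star_p)^{-s}$ appearing in \eqref{eq:thetadagger}, while the prefactor $\vol(K^{H,p}_f)^{-1}$ in \eqref{eq:theta-dagger} emerges from comparing the Haar weight $\vol(K^{H,p}_f K^H_{p,0}(p^r))$ in the collapsed adelic integral with the counting measure implicit in $\sB_{K^{H,p}_f}$, after the weight at $p$ is cancelled by the coset count inside $(\tilde U^H_{p,\ttt^\star_p})^{-s}$. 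Once these normalizations are reconciled, the interpolation formula follows directly from Proposition~\ref{prop:mu-beta} and the definitions.
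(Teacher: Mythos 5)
Your proposal matches the paper's proof: both define the family coefficient-by-coefficient as $\sB_{K^{H,p}_f}\left(\mu_{g^p_f,\beta}\otimes\bvarphi\right)$ for $\beta\in\Her_n(K)_{\geq 0}$ and deduce the interpolation formula from Proposition~\ref{prop:mu-beta} together with the definition of the $p$-adic Petersson pairing \eqref{eq:sB}. The paper leaves the unwinding of $\sB_{K^{H,p}_f}$ against the collapsed adelic integral (and the attendant volume and $\bU_p$-normalization bookkeeping) implicit, whereas you spell it out, but the argument is the same.
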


\begin{proof}
For each $\beta\in \Her_n(K)_{\geq 0}$, viewing $\bvarphi$ as element in $\pzM^H_\ord(K^{H,p}_f,\cO_L)'\otimes_{\cO_L\llb T_{H,p}(\bZ_p)\rrb} \bI_{\bvarphi}$ and applying the pairing \eqref{eq:sB} to the measure $\mu_{g^p_f,\beta}$ in Proposition~\ref{prop:mu-beta} and $\bvarphi$, we obtain 
\[
   \sB_{K^{H,p}_f}\left(\mu_{g^p_f,\beta}\otimes\bvarphi\right)\in \bI_{\bvarphi},
\]
which, by the definition of $\sB_{K^{H,p}_f}$ and the interpolation properties of $\mu_{g^p_f,\beta}$, interpolates the $\beta$-th coefficient in the $q$-expansion of $\theta^\dagger\left(\phi^p_f\otimes \prod\nolimits_{\fp\mid p}\phi_{\uchi_\fp}\otimes\prod\nolimits_{\sigma:F\ra\bR} \phi^\sG_\sigma,\lambda(\bvarphi)\right)$ at the cusp indexed by $g^p_f$, multiplied by $\vol(K^{H,p}_f)^{-1}$. Putting these $\sB_{K^{H,p}_f}\left(\mu_{g^p_f,\beta}\otimes\bvarphi\right)$'s together gives the desired family of $q$-expansions $ {\bm \theta}(\bvarphi)_{\qexp, g^p_f}$.
\end{proof}

\subsection{Theta lifts of Hida families from $H$ to $G$}
When suitable Hida theory on $G$ is available, we can produce a Hida family on $G$ from the $\bI_{\bvarphi}$-adic family of $q$-expansion in Proposition~\ref{prop:qexp}.

\subsubsection{$P$-ordinarity}\label{sec:Pord}
Define the following parabolic subgroup $P\subset \GL(n)$:
\begin{align*}
   P=\left\{\begin{blockarray}{ccc}m&n-m\\ \begin{block}{[cc]c}\begin{smallmatrix}\ast&\cdots&\ast\\ &\ddots&\vdots\\ &&\ast \end{smallmatrix} &\begin{smallmatrix}\ast&\cdots&\ast\\ \vdots &\ddots&\vdots\\ \ast&\cdots&\ast \end{smallmatrix}&m\\[3ex] &\begin{smallmatrix}\ast&\cdots&\ast\\ \vdots &\ddots&\vdots\\ \ast&\cdots&\ast \end{smallmatrix}&n-m\\ \end{block}\end{blockarray}\in \GL(n)  \right\},
\end{align*}
and
\begin{align}
   \label{eq:SP} SP=\left\{ \begin{bmatrix}\begin{smallmatrix}1&\cdots&\ast\\ &\ddots&\vdots\\&&1\end{smallmatrix} &\ast\\ & A_2\end{bmatrix}\in P: A_2\in \SL(n-m)\right\}.
\end{align}
Denote by $P_{\cO_\fp},SP_{\cO_\fp}$ (resp.  $SP_{\cO_\fp/p^r}$)  the subgroup of the $\fp$-component of $P(\bQ_p),SP(\bQ_p)$ consisting of elements with entries in $\cO_\fp$ (resp. $\cO_\fp/p^r$) and entries along the diagonal in $\cO^\times_\fp$ (resp. $(\cO_\fp/p^r)^\times$). Write $T_P(\bZ_p)=\prod_{\fp\mid p} P_{\cO_\fp}/ SP_{\cO_\fp}$.

Let $\sT_{l,r}$ be the level $r$ Igusa tower over $\cO_L/p^l\cO_L$ over the ordinary locus of the Shimura variety for $G'$ of tame level $K^{G',p}_f\subset G'(\bA^p_{\bA,f})$. Let
\[
     V^{G',n-m}_{P,l,r}\big(K^{G',p}_f,\cO_L\big)
    =H^0\left(\sT_{l,r}/K_{SP,\fp}(p^r),\cI^{n-m}_{l,r}\right)
\]
with $K_{SP,\fp}(p^r)=\left\{A\in \GL(n,\cO_\fp):A\mod p^r \text{ belongs to } SP(\cO_\fp/p^r)\right\}$, and $\cI^{n-m}_{l,r}$ the pullback to $\sT_{l,r}$ of the sheaf of ideals associated to the union of the strata indexed by a fully symplectic admissible filtration with the isotropic part of rank $> n-m$ on the Shimura variety for $G'$. Put
\begin{align*}
   \pzV^{G',n-m}_P\big(K^{G',p}_f,\cO_L\big)&=\varinjlim_l\varinjlim_r  V^{G',n-m}_{P,l,r},
\end{align*}
which is naturally a module over $\cO_L\llb T_P(\bZ_p) \rrb$.

Consider the following sub-semigroup of $T^+_{G_\fp}$ (defined in \eqref{eq:T+GH}):
\begin{align*}
    T^+_{P,\fp}&=\left\{\breve{\ttt}\in T^+_{G,\fp}: \breve{\ttt} P_{\cO_\fp} \breve{\ttt}^{-1} \subset P_{\cO_\fp}\right\}\\
    &=\left\{\diag(a_1,\dots,a_m, d,\dots,d,\bar{a}^{-1}_1,\dots,\bar{a}^{-1}_m,\bar{d}^{-1},\dots,\bar{d}^{-1})\in T^+_{G,\fp}\right\}.
\end{align*}
For each element $\breve{\ttt}_\fp\in  T^+_{P,\fp}\subset T^+_{G_\fp}$, there is an optimally normalized $\bU_p$-operator $\tilde{U}^G_{\fp,\breve{\ttt}^\star_\fp}$ acting on $V^{G'}_{P,l,r}$, which agrees with the $\bU_p$-operator $U^G_{\fp,\breve{\ttt}^\star_\fp}$ defined in \eqref{eq:UGp} up to a power of $p$ when acting on classical automorphic forms on $G'$. Fix 
\[
    \breve{\ttt}^\star_\fp
   =\diag(p^{m+1},p^m,\cdots,p^2,p,\dots,p,p^{-m-1},p^{-m},\cdots,p^{-2},p^{-1},\dots,p^{-1}),
\]
and define the $P$-ordinary projector
\begin{equation}\label{eq:eG-Pord}
    e^G_{\Pord}=\lim_{s\ra\infty}  \left(\tilde{U}^G_{\fp,\breve{\ttt}^\star_\fp}\right)^s.
\end{equation}

\subsubsection{Hida theory for automorphic forms on $G'$}\label{sec:HidaTheory}

A (non-cuspidal $P$-ordinary) Hida theory aims to establish the following statements:
\begin{enumerate}
\item[--] The limit \eqref{eq:eG-Pord} exists on $\pzV^{G',n-m}_P$.
\item[--] The $\cO_L\llb T_P(\bZ_p)\rrb$-module 
\[
    \pzV^{G',n-m}_{\Pord}\big(K^{G',p}_f,\cO_L\big)^*
     =  e^G_{\Pord}\Hom_{\bZ_p}\left(\pzV^{G',n-m}_P\big(K^{G',p}_f,\cO_L\big),\bQ_p/\bZ_p\right)
\] 
is free of finite rank over $\Lambda_P=\cO_L\llb T_P(\bZ_p)^\circ\rrb$, where $T_P(\bZ_p)^\circ$ is the connected component of $T_P(\bZ_p)$. 
\item[--] Defining the space of $P$-ordinary Hida families of automorphic forms of tame level $K^{G',p}_f$ on $G'$ as
\[ 
     \pzM^{G',n-m}_{\Pord}(K^{G',p}_f,\cO_L)=\Hom_{\Lambda_P}\left(\pzV^{G',n-m}_{\Pord}\big(K^{G',p}_f,\cO_L\big)^*,\Lambda_P\right),
\] 
for all $\tau\in \Hom(T_P(\bZ_p),\ol{\bQ}^\times_p)$ that can be written as a product of an algebraic character and a finite-order character, there is a natural embedding 
\begin{align}
   \label{eq:Ptau-embd} M^{G',n-m}_{\Pord}(K^{G',p}_f,\tau,\cO_{L(\tau)})\lhra \pzM^{G',n-m}_{\Pord}(K^{G',p}_f,\cO_L)\otimes_{\cO_L\llb T_P(\bZ_p)\rrb} \cO_L\llb T_P(\bZ_p)\rrb/\cP_{\tau},
\end{align}
where $\cP_\tau\subset \cO_L\llb T_P(\bZ_p)\rrb$ is the ideal corresponding to $\tau$, $M^{G',n-m}_{\Pord}(K^{G',p}_f,\tau,\cO_{L(\tau)})$ is the $\cO_{L(\tau)}$-module spanned by $P$-ordinary classical holomorphic automorphic forms on $G'$ of tame level $K^{G',p}_f$ and weight $\tau$ with coefficient of $q$-expansions belonging to $\cO_{L(\tau)}=\cO_L \llb T_P(\bZ_p)\rrb/\cP_{\tau}$.

\end{enumerate}
Assuming that $p$ is an odd prime and unramified in $K$, Such a Hida theory has been established in the following cases:
\begin{enumerate}
\item $K=F=\bQ$ \cite{HidaCon,PiHida,LRtz},
\item $m=n$, $K/F$ is a quadratic CM extension, every places of $F$ above $p$ splits in $K$  \cite{HidaCon, Marcil}.
\end{enumerate}

\subsubsection{Theta lifts of Hida families}

Let $SP_{G',\bZ_p}=\left\{\begin{bmatrix}A&B\\0&D\end{bmatrix}\in G'(\bZ_p):A\in \prod_{\fp\mid p}SP_{\cO_\fp}\right\}$ (with $SP_{\cO_\fp}$ defined below \eqref{eq:SP}) and $SP_{G,\bZ_p}=SP_{G',\bZ_p}\cap G(\bZ_p)$. Let $K^G_\infty=\prod_{\sigma:F\ra\bR} K^G_\sigma$ with $K^G_\sigma$ consisting of $\begin{bmatrix}A&B\\ -\ltrans{\bar{B}}&\bar{A}\end{bmatrix}\in \GL_{2n}(K_\sigma)$ with $A+iB$ preserving the symmetric (resp. Hermitian) form given by $\bid_{2n}$, and let $K^{G'}_\infty$ be the product of $K^G_\infty$ and the connected component of $\bid_{2n}$ of the center of $G'(\bR)$. 

We denote by $\imath_{G,G'}$ the extension by zero corresponding to the embedding 
\begin{equation}\label{eq:imathembd}
G(\bQ)\backslash G(\bA_\bQ)/K^{G,p}_f SP_{G,\bZ_p} K^G_\infty\hra G'(\bQ)\backslash G'(\bA_\bQ)/K^{G',p}_f SP_{G',\bZ_p} K^{G'}_\infty.
\end{equation}

\begin{thm}\label{thm:family}
Suppose that we are in one of the cases listed at the end of  \S\ref{sec:HidaTheory}. In the same setting as Proposition~\ref{prop:qexp}, take a tame level group $K^{G',p}_f\subset \prod_{v\neq p \text{ finite place of $\bQ$}}G'(\bZ_v)$ such that the Schwartz function $\phi^p_f$ is fixed by $K^{G,p}_f=K^{G'_p}_f\cap G(\bA^p_{\bQ,f})$. There exists an $\bI_{\bvarphi}$-adic $P$-ordinary family
\[
    {\bm \theta}(\bvarphi)\in  \pzM^{G',n-m}_{\Pord}(K^{G',p}_f,\cO_L)\otimes_{\Lambda_P} \bI_{\bvarphi}
\]
interpolating the theta lifts of finite-order points of $\bvarphi$, {\it i.e.} for all points $\lambda:\bI_{\bvarphi}\ra\ol{\bQ}_p$ as in   Proposition~\ref{prop:qexp},
\begin{equation}\label{eq:theta-dagger}
    \lambda({\bm \theta}(\bvarphi))
    =\vol(K^{H,p}_f)^{-1}\cdot \imath_{G,G'}\left(\theta^\dagger\left(\phi^p_f\otimes \prod\nolimits_{\fp\mid p}\phi_{\uchi_\fp}\otimes\prod\nolimits_{\sigma:F\ra\bR} \phi^\sG_\sigma,\lambda(\bvarphi)\right) \right),
\end{equation}
where $\theta^\dagger$ denotes the theta lift with a modification at $p$ defined as in \eqref{eq:thetadagger}.
\end{thm}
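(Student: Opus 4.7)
The plan is to promote the $\bI_{\bvarphi}$-adic family of $q$-expansions produced by Proposition~\ref{prop:qexp} to an honest $\bI_{\bvarphi}$-adic $P$-ordinary family on $G'$, by first showing that its specializations at finite-order arithmetic points $\lambda$ lie in the image of the classical embedding \eqref{eq:Ptau-embd}, and then invoking the freeness part of Hida theory in the cases of \S\ref{sec:HidaTheory} together with a $q$-expansion principle. The key local input that makes classical $P$-ordinarity hold at every specialization is the $\bU_\fp$-equivariance of the Schwartz functions $\phi_{\uchi_\fp}$ established in Theorem~\ref{thm:local}.

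Fix a finite-order point $\lambda$ as in the statement. Then $\lambda(\bvarphi)$ is a classical holomorphic $P$-ordinary form on $H$, and the theta lift
\[
    \varphi_\lambda = \imath_{G,G'}\bigl(\theta^\dagger(\phi^p_f\otimes\prod\nolimits_\fp \phi_{\uchi_\fp}\otimes\prod\nolimits_\sigma \phi^\sG_\sigma,\,\lambda(\bvarphi))\bigr)
\]
is a classical holomorphic form on $G'$; its archimedean weight is dictated by the Gaussians $\phi^\sG_\sigma$ and its weight at $p$ by the character $\uchi$ pulled back to $T_P(\bZ_p)$ through the Siegel parabolic, so it lies in some $M^{G',n-m}_{\Pord}(K^{G',p}_f,\tau_\lambda,\cO_{L(\tau_\lambda)})$ once the next step is done. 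To verify $P$-ordinarity, I would apply $\tilde U^{G_\fp}_{\fp,\breve{\ttt}^\star_\fp}$ inside the theta integral: by the Weil-representation formulas of \S\ref{sec:WeilRep} this amounts to replacing $\phi_{\uchi_\fp}$ by $\tilde U^{G_\fp}_{\fp,\breve{\ttt}^\star_\fp}\phi_{\uchi_\fp}$, which by Theorem~\ref{thm:local} equals an explicit scalar times $\tilde U^{H_\fp}_{\fp,\ttt^\star_\fp}\phi_{\uchi_\fp}$ (up to the tracked volume factors). A change of variables on $H$ then transfers the $\bU_p$-action to $\lambda(\bvarphi)$, showing that $\varphi_\lambda$ is a $\tilde U^G_{\fp,\breve{\ttt}^\star_\fp}$-eigenform with eigenvalue equal to the ordinary eigenvalue $\alpha(\lambda(\bvarphi),\ttt^\star_p)$ multiplied by an explicit $p$-adic unit; hence $\varphi_\lambda$ is $P$-ordinary.

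Next I assemble the family. For each $g^p_f$, the $\bI_{\bvarphi}$-adic $q$-expansion ${\bm \theta}(\bvarphi)_{\qexp,g^p_f}$ of Proposition~\ref{prop:qexp} specializes at every finite-order $\lambda$ to the $q$-expansion at the cusp indexed by $g^p_f$ of the classical $P$-ordinary form $\varphi_\lambda$. Through the embedding \eqref{eq:Ptau-embd}, $\varphi_\lambda$ determines a unique element of $\pzM^{G',n-m}_{\Pord}(K^{G',p}_f,\cO_L)\otimes_{\Lambda_P}\cO_L\llb T_P(\bZ_p)\rrb/\cP_{\tau_\lambda}$. The freeness of $\pzV^{G',n-m}_{\Pord}(K^{G',p}_f,\cO_L)^*$ over $\Lambda_P$ in the cases of \S\ref{sec:HidaTheory} yields a $q$-expansion principle for $\Lambda_P$-adic $P$-ordinary families: a family is determined by, and can be constructed from, the system of its $q$-expansions at all $g^p_f$ provided each is known to specialize to a classical $P$-ordinary $q$-expansion at a Zariski-dense set of arithmetic $\lambda$. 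Applying this to ${\bm \theta}(\bvarphi)_{\qexp,g^p_f}$ produces the required ${\bm \theta}(\bvarphi)\in \pzM^{G',n-m}_{\Pord}(K^{G',p}_f,\cO_L)\otimes_{\Lambda_P}\bI_{\bvarphi}$; the interpolation property \eqref{eq:theta-dagger} then holds by construction on the $q$-expansion side and hence on the $\Lambda$-adic side.

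The main obstacle I anticipate is the bookkeeping required to reconcile the several normalizations entering the argument: the factor $\vol(K^{H,p}_f)^{-1}$ and the operator $\tilde U^H_{p,\ttt^\star_p}$ used to define $\theta^\dagger$, the explicit prefactor $\bigl(\tilde\chi_{K_\fp/F_\fp}(\det A(\breve{\ttt}))\gamma(\det A(\breve{\ttt}),\tfrac12\psi_\fp)\bigr)^\epsilon|\det A(\breve{\ttt})|_\fp^{(2m+\epsilon)/2}$ produced by Theorem~\ref{thm:local}, and the optimally normalized Igusa-tower operator $\tilde U^G_{\fp,\breve{\ttt}^\star_\fp}$ built into the Hida theory of $G'$. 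Verifying that these conspire to produce a genuinely unit $\tilde U^G_{\fp,\breve{\ttt}^\star_\fp}$-eigenvalue on $\varphi_\lambda$, and that the resulting $\Lambda_P$-adic family's specializations match the right-hand side of \eqref{eq:theta-dagger} exactly, is where the bulk of the work lies; once this is done, the remainder is a formal application of Hida theory.
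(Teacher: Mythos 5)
Your proposal is correct and follows essentially the same route as the paper: build the $\bI_{\bvarphi}$-adic system of $q$-expansions from Proposition~\ref{prop:qexp}, check via Theorem~\ref{thm:local} that each finite-order specialization is a classical $P$-ordinary form, and then use Hida theory plus the $q$-expansion principle to recognize the system as a genuine $\Lambda_P$-adic family. The only difference is one of emphasis: the ``$q$-expansion principle for $\Lambda_P$-adic families'' that you invoke as a black box is precisely what the paper's proof spends its length establishing (equality of $\pzM^{G',n-m}_{\Pord}$ with the module of $q$-expansion systems having classical specializations, proved by a density argument over ${\rm Frac}(\Lambda_P)$ followed by Nakayama's lemma applied to the submodule of $q$-expansion coefficient functionals), while the $P$-ordinarity check you detail is left implicit there.
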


\begin{proof}
Taking the $\beta$-th coefficient at the  cusp indexed by $g^p_f$ gives rise to an element in $ \pzV^{G'}_{\Pord}\big(K^{G',p}_f,\cO_L\big)^*$, inducing an $\Lambda_P$-linear map:
\[
    \pzM^{G',n-m}_{\Pord}(K^{G',p}_f,\cO_L)\lra \Lambda_P.
\]
Putting such maps together, with $\beta$ varying in $\Her_n(K)_{\geq 0}$ and $g^p_f$ varying in a set $\fC$ of representatives of $p$-aidc cusp labels inside the ordinary locus of tame level $K^{G',p}_f$, gives the $q$-expansion map
\begin{equation}\label{eq:qexp}
   \pzM^{G',n-m}_{\Pord}(K^{G',p}_f,\cO_L)\lra \bigoplus_{\fC} \Lambda_P\llb\Her_n(K)_{\geq 0}\rrb,
\end{equation}
which is an embedding by \cite[Section~8.4]{HidaPAF}. Via this embedding, we view $\pzM^{G',n-m}_{\Pord}(K^{G',p}_f,\cO_L)$ as a sub-$\Lambda_P$-module of $\bigoplus_{\fC} \Lambda_P\llb\Her_n(K)_{\geq 0}\rrb$. We consider another sub-$\Lambda_P$-module of $\bigoplus_{\fC} \Lambda_P\llb\Her_n(K)_{\geq 0}\rrb$, which we denote by $\pzM^{G',n-m}_{\Pord,\qexp}(K^{G',p}_f,\cO_L)$, consisting of elements whose specializations at $\cP_{\tau}$ belongs to the image of \eqref{eq:Ptau-embd} for almost all finite-order $\tau:T_P(\bZ_p)^\circ\ra\ol{\bQ}^\times_p$. It is clear that, viewed as  sub-$\Lambda_P$-module of $\bigoplus_{\fC} \Lambda_P\llb\Her_n(K)_{\geq 0}\rrb$,
\begin{equation}\label{eq:incl}
    \pzM^{G',n-m}_{\Pord}(K^{G',p}_f,\cO_L)
    \subset \pzM^{G',n-m}_{\Pord,\qexp}(K^{G',p}_f,\cO_L).
\end{equation}

The $q$-expansions  in Proposition~\ref{prop:qexp} gives rise to an element 
\[
    {\bm \theta}(\bvarphi)_{\qexp}\in \pzM^{G',n-m}_{\Pord,\qexp}(K^{G',p}_f,\cO_L)\otimes_{\Lambda_P}\bI_{\bvarphi}
\]
such that for all $\lambda$ in the statement of the proposition $\lambda({\bm \theta}(\bvarphi)_{\qexp})$ equals the $q$-expansion of the right hand side of \eqref{eq:theta-dagger}. Therefore, it suffices to show that the inclusion \eqref{eq:incl} is an equality. The argument is standard in Hida theory. We include it here for the convenience for the reader.

First, we show that \begin{equation}\label{eq:incl}
    \pzM^{G',n-m}_{\Pord}(K^{G',p}_f,\cO_L)\otimes_{\Lambda_P}{\rm Frac}(\Lambda_P)
    = \pzM^{G',n-m}_{\Pord,\qexp}(K^{G',p}_f,\cO_L)\otimes_{\Lambda_P}{\rm Frac}(\Lambda_P).
\end{equation}
Take a basis $\cF_1,\dots,\cF_d$ of the left hand side (as a ${\rm Frac}(\Lambda_P)$-vector space), and extend it to a basis of the right hand side. Then we can write $\cF=a_1\cF_1+\cdots+a_d\cF_d+\cF'$ with $\cF'=0$ or an element inside the complement of the left hand side inside the right hand side. For infinitely many $\underline{\tau}:P/SP(\bZ_p)^\circ\ra\ol{\bQ}^\times_p$, the specialization of $\cF$ at $\cP_{\underline{\tau}}$ belongs to the specialization of $\pzM^{G',n-m}_{\Pord}(K^{G',p}_f,\cO_L)$, which is spanned by the specializations of $\cF_1,\dots,\cF_d$ over $\Lambda_P/\cP_{\underline{\tau}}$. It follows that $\cF'\equiv 0\mod \cP_{\underline{\tau}}$ for infinitely many $\underline{\tau}$, which implies that $\cF'=0$. This proves \eqref{eq:incl}. To finish the proof, it remains to showing that 
\begin{equation}\label{eq:Mqexp}
    \pzM^{G',n-m}_{\Pord}(K^{G',p}_f,\cO_L)=\pzM^{G',n-m}_{\Pord}(K^{G',p}_f,\cO_L)\otimes_{\Lambda_P}{\rm Frac}(\Lambda_P)\cap \pzM^{G',n-m}_{\Pord,\qexp}(K^{G',p}_f,\cO_L).
\end{equation}
Let $\pzC\subset \pzV^{G'}_{\Pord}\big(K^{G',p}_f,\cO_L\big)^*$ be the $\Lambda_P$-submodule generated by elements obtained as taking coefficients of $q$-expansions at cusps inside the ordinary locus. Denoting by $\fm$ the maximal ideal of $\Lambda_P$ and $\ol{\pzC}$ the image of $\pzC$ inside $\pzV^{G'}_{\Pord}\big(K^{G',p}_f,\cO_L\big)^*\otimes (\Lambda_P/\fm)=\Hom_{\bZ_p}(\pzV^{G',n-m}_P(K^{G',p}_f,\cO_L)[\fm],\bQ_p/\bZ_p)$. It follows from the $q$-expansion principle that 
\[\ker(\ol{\pzC})=\{v\in \pzV^{G',n-m}_P(K^{G',p}_f,\cO_L)[\fm]:\eta(v)=0\text{ for all }\eta\in \pzC\}=0.\]
Since $\pzV^{G'}_{\Pord}\big(K^{G',p}_f,\cO_L\big)^*$ is a finite $\Lambda_P$-module, $\pzV^{G',n-m}_P(K^{G',p}_f,\cO_L)[\fm]$  is finite dimensional over $\Lambda_P/\fm$, and $\ker(\ol{\pzC})=0$ implies that $\ol{\pzC}=\pzV^{G'}_{\Pord}\big(K^{G',p}_f,\cO_L\big)^*\otimes (\Lambda_P/\fm)$. By Nakayama's Lemma, we know $\pzC=\pzV^{G'}_{\Pord}\big(K^{G',p}_f,\cO_L\big)^*$, which implies \eqref{eq:Mqexp}.

\end{proof}

\begin{rmk}
One main feature of the Hida family ${\bm \theta}(\bvarphi)$ constructed in Theorem~\ref{thm:family} lies in its explicit specialization formula \eqref{eq:theta-dagger}. Such a formula is expected to be useful for applying it to study the family of Galois representations associated to $\bvarphi$. When $\bvarphi$ is a Hecke eigen-family with associated Hecke eigensystem $\lambda_{\bvarphi}$, based on results on theta correspondence for automorphic representations, it is easy to directly write down a Hecke eigensystem $\theta(\lambda_{\bvarphi})$ (in a manner like in \cite[Section~4.1.3]{pLF-F}), and deduce that $\left(\pzM^{G',n-m}_{\Pord}(K^{G',p}_f,\cO_L)\otimes_{\Lambda_P} \bI_{\bvarphi}\right)[\theta(\lambda_{\bvarphi})]$ has rank $\geq 1$ over $\bI_{\bvarphi}$. Any element in this family can be viewed as a theta lift of the family $\bvarphi$, but it might be hard to pin down its specializations without a scalar ambiguity.

\end{rmk}


\bibliographystyle{halpha}
\bibliography{BiStd}

\end{document}